\newtheorem{thm}{Theorem}[section]
\newtheorem{cor}[thm]{Corollary}
\newtheorem{lem}[thm]{Lemma}
\newtheorem{prop}[thm]{Proposition}
\newtheorem{defi}[thm]{Definition}
\newtheorem{que}[thm]{Question}
\newtheorem{rem}[thm]{Remark}
\newcommand{\la}{\langle}
\newcommand{\ra}{\rangle}
\newcommand{\mr}{\mathbf{R}}
\newcommand{\mn}{\mathbf{N}}
\newcommand{\ord}{\text{ord}}
\title{Factorial and Noetherian Subrings of Power Series Rings}
\author{Damek Davis and Daqing Wan \\ \small Department of Mathematics \\\small University of California
\\\small Irvine, CA 92697-3875 \\\small davisds@uci.edu \\\small dwan@math.uci.edu}
\begin{document}
  \maketitle

\begin{abstract}
Let $F$ be a field.  We show that certain subrings contained between the polynomial ring $F[X] = F[X_1, \cdots, X_n]$ and the
power series ring $F[X][[Y]] = F[X_1, \cdots, X_n][[ Y]]$ have Weierstrass Factorization, which allows us to deduce both unique factorization and the Noetherian property.  These intermediate subrings are obtained from elements of $F[X][[ Y]]$ by bounding their total $X$-degree above by a positive real-valued monotonic up function $\lambda$ on their $Y$-degree. These rings arise naturally in studying $p$-adic analytic variation of zeta functions over finite fields. Future research into this area may study more complicated subrings in which $Y = (Y_1, \cdots, Y_m)$ has more than one variable, and for which there are multiple degree functions, $\lambda_1, \cdots, \lambda_m$. Another direction of study would be to generalize these results to $k$-affinoid algebras.
\end{abstract}

\section{Introduction}
Let $R$ be a commutative ring with unity, and let $S_k$ be the set of polynomials in $R[X, Y] = R[X_1, \cdots, X_n][ Y_1, \cdots, Y_m]$ that are homogeneous in $Y$ of degree $k$.  Every element of $R[X][[Y]]$ can be written uniquely in the form
\begin{eqnarray}
 f &=& \sum_{k=0}^\infty f_{k}(X, Y),
\end{eqnarray}
where $f_k(X, Y)$ is an element of $S_k$. In this expansion, there is no restriction on $\deg_X f_k(X, Y)$. Motivated by several applications to the $p$-adic theory of zeta functions over finite fields, we want to consider subrings of $R[X][ Y]]$ in which $\deg_X (f_k)$ is bounded above by some function $\lambda$. In particular, let $\lambda : \mr_{\geq 0} \rightarrow \mr_{\geq 0}$ be a monotonic up function. We call $\lambda$ a growth function. Following Wan \cite{Wan1}, we define a subring of $R[X][[Y]]$ as follows:
\begin{eqnarray}
 R[X; Y, \lambda] &=& \{f = \sum_{k=0}^\infty f_{k}(X, Y) :
 f_k  \in S_k, \deg_X(f_k) \leq C_f\lambda(k), \text{ for } k \gg 0 \},
\end{eqnarray}
where $C_f$ is a constant depending only on $f$. Since $\lambda$ is monotonic up, it satisfies the trivial inequality,
\begin{eqnarray*}
 \lambda(x) + \lambda(y) &\leq& 2\lambda(x+y)
\end{eqnarray*}
for all $x$ and $y$ in $\mr_{\geq 0}$. From this inequality, it is clear that $R[X; Y, \lambda]$ is an $R[X]$-algebra, which contains $R[X]$.

If $\lambda$ is invertible, we have the following equivalent definition:
\begin{eqnarray}
 R[X; Y, \lambda] &=& \{g = \sum_{d=0}^\infty g_{d}(X, Y) : g_d  \in A_d, \ord_Y(g_d) \geq \lambda^{-1}(C_gd), \text{ for } d \gg 0\},
\end{eqnarray}
where $A_d$ is the subset of elements of $R[[Y]][X]$, which are homogeneous of degree $d$ in $X$, and $\ord_Y (g_d)$ is the largest integer $k$ for which $g_d$ is an element of $Y^kR[X][[Y]]$.

It is clear that for any positive constant $c >  0$, $R[X; Y, c\lambda] = R[X; Y, \lambda]$. If $\lambda(x)$ is a positive constant, then $R[X; Y, \lambda] = R[[Y]][X]$. If $\lambda(x) = x$ for all $x$ in $\mr_{\geq 0}$, then $R[X; Y, \lambda]$ is called the over-convergent subring of $R[X][[Y]]$, which is the starting point of Dwork's $p$-adic theory for zeta functions. In both of these cases, if $R$ is noetherian, it is known that $R[X; Y, \lambda]$ is noetherian: when $\lambda$ is constant, the result follows from Hilbert's Basis Theorem; the case in which $\lambda(x) = x$ is proved in Fulton \cite{Fulton}. More generally, if $R$ is noetherian and $\lambda$ satisfies the following inequality,
\begin{eqnarray*}
 \lambda(x) + \lambda(y) \leq \lambda(x+y) \leq \lambda(x)\mu(y)
\end{eqnarray*}
for all sufficiently large $x$ and $y$, where $\mu$ is another positive valued function such that $\mu(x) \geq 1$ for all $x$ in $\mr_{\geq0}$, then $R[X; Y, \lambda]$ is also noetherian as shown in Wan \cite{Wan1}. For example, any exponential function $\lambda(x)$ satisfies the above inequalities. In this case the ring is particularly interesting because it arises naturally from the study of unit root F-crystals from geometry, see Dwork-Sperber \cite{DS} and Wan \cite{Wan2} for further discussions.

The first condition, $\lambda(x) + \lambda(y) \leq \lambda(x+y)$, is a natural assumption because it ensures that elements of the form $(1-XY)$ are invertible, a vital condition to this paper. If $\lambda$ does not grow at least as fast as linear, then $(1-XY)^{-1} = 1 + \sum_{i=1}^{\infty} X^kY^k$ is not an element of $R[X; Y, \lambda]$.  It is not clear, however, if the second condition, $\lambda(x+y) \leq \lambda(x)\mu(y)$, can be dropped. In fact, we have the following open question from Wan \cite{Wan1}.
\begin{que} Let $R$ be a noetherian ring. Let $\lambda(x)$ be a growth function satisfying $\lambda(x) + \lambda(y) \leq \lambda(x+y)$. Is the intermediate ring $R[X; Y, \lambda]$ always noetherian?
\end{que}

This question is solved affirmatively in this paper if $R$ is a field and there is only one $Y$ variable.

Throughout this paper we assume that $R=F$ is a field, and that $\lambda$ grows at least as fast as linear, i.e. $\lambda(x) + \lambda(y) \leq \lambda(x+y)$ for all $x, y \geq 0$. Further, we assume that $\lambda(0) = 0$ and $ \lambda(\infty) = \infty$, because normalizing $\lambda$ this way does not change $F[X; Y, \lambda]$. Without loss of generality we also assume that $\lambda$ is strictly increasing. Finally, we assume that $F[X; Y, \lambda]$ has only one $Y$ variable.  We call an element
\begin{eqnarray*}
 g = \sum_{d=0}^\infty g_d(X_1, \cdots, X_{n-1}, Y)X_n^d
\end{eqnarray*}
in $F[X; Y, \lambda]$ $X_n$-distinguished of degree $s$ if $g_s$ is a unit in $F[X_1,\cdots, X_{n-1};Y,\lambda]$, and $\ord_Y(g_d) \geq 1$ for all $d> s$. The main result of this paper is the following

\begin{thm}
Under the above assumptions, we have
\begin{enumerate}
 \item (Euclidean Algorithm) \textit{Suppose that $g$ is $X_n$-distinguished of degree $s$ in $F[X; Y, \lambda]$, and that $f$ is an element of $F[X; Y, \lambda]$.  Then there exist unique elements, $q$ in $F[X; Y, \lambda]$, and $r$ in the polynomial ring $F[X_1, \cdots, X_{n-1}; Y, \lambda][X_n]$ with ${\rm deg}_{X_n}(r) <s$, such that $f = qg + r$.}
\item (Weierstrass Factorization) \textit{Let $g$ be $X_n$-distinguished of degree $s$. Then there exists a unique monic polynomial $\omega$ in $F[X_1, \cdots, X_{n-1}; Y, \lambda][X_n]$ of degree $s$ in $X_n$ and a unique unit $e$ in $F[X ; Y, \lambda]$ such that $g = e\cdot \omega$.  Further, $\omega$ is distinguished of degree $s$.} 
\item (Automorphism Theorem) \textit{Let $g(X,Y) = \sum_{\mu}g_\mu(Y)X^\mu$ be an element of $F[X; Y,\lambda]$ where $\mu = (\mu_1, \cdots, \mu_n)$ and $X^\mu = X_1^{\mu_1}\cdots X_n^{\mu_n}$. If $g_\mu(Y)$ is not divisible by $Y$ for some $\mu$,  where $\mu_n > 0$, then there exists an automorphism $\sigma$ of $F[X; Y, \lambda]$ such that $\sigma(g)$ is $X_n$-distinguished.}
 \item \textit{$F[X; Y, \lambda]$ is noetherian and factorial.}
\end{enumerate}
\end{thm}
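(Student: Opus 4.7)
My overall plan is to prove the four parts in the stated order, with each providing the machinery for the next, and to deduce the noetherian and factorial conclusions by induction on the number of $X$-variables $n$, in close analogy with the classical proof that $F[[X_1, \ldots, X_n]]$ is noetherian and factorial via Weierstrass preparation. The Euclidean algorithm (part 1) is the technical core; Weierstrass factorization (part 2) falls out by applying the algorithm to $f = X_n^s$; the automorphism theorem (part 3) is a weighted change of variables that brings arbitrary elements into distinguished position; and together these three power the induction in part (4).

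For part (1), after replacing $g$ by $g_s^{-1}g$ I may assume $g = X_n^s + \alpha + \beta$, where $\alpha \in F[X_1,\ldots,X_{n-1}; Y, \lambda][X_n]$ has $X_n$-degree less than $s$ and every coefficient $g_d$ of $\beta = \sum_{d > s} g_d X_n^d$ is divisible by $Y$. Expanding $f$ and $g$ in powers of $Y$ as $\sum_k f^{(k)}(X) Y^k$ and $\sum_k g^{(k)}(X) Y^k$, the equation $f = qg + r$ becomes the recursion
\[
q^{(k)} g^{(0)} + r^{(k)} \;=\; f^{(k)} \;-\; \sum_{i+j=k,\, j \geq 1} q^{(i)} g^{(j)},
\]
which I solve at each $k$ by the ordinary polynomial Euclidean division by $g^{(0)} = g|_{Y=0}$, a monic polynomial of $X_n$-degree $s$ over $F[X_1, \ldots, X_{n-1}]$. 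Uniqueness holds modulo $Y^N$ for every $N$ and therefore in the $Y$-adic completion, hence in $F[X;Y,\lambda]$. The main technical obstacle is showing that $q$ and $r$ actually lie in $F[X;Y,\lambda]$ and $F[X_1,\ldots,X_{n-1};Y,\lambda][X_n]$ respectively, that is, that $\deg_X(q^{(k)}) \leq C' \lambda(k)$ (and analogously for $r^{(k)}$) for $k \gg 0$. This is precisely where the subadditivity $\lambda(x) + \lambda(y) \leq \lambda(x+y)$ enters, through an induction on $k$ that tracks both the total $X$-degree and the $X_n$-degree through each Euclidean step.

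Part (2) is then immediate: applying part (1) with $f = X_n^s$ writes $X_n^s = qg + r$ with $\deg_{X_n} r < s$, and setting $\omega = X_n^s - r$ and $e = q$ gives $g = e\omega$ with $\omega$ a monic polynomial of degree $s$ in $X_n$; reducing modulo $Y$ shows $q^{(0)} = g_s(0)^{-1}$, a nonzero constant in $F$, so $q$ is a unit of $F[X;Y,\lambda]$, and uniqueness of $(e,\omega)$ follows from uniqueness in part (1). For part (3) I use the classical weighted substitution: choose positive integers $N_1, \ldots, N_{n-1}$ so that the weights $w(\mu) = \mu_n + \sum_{i<n} \mu_i N_i$ are pairwise distinct on the support of the polynomial $g(X,0)$ (for example, $N_i = D^i$ with $D$ larger than every exponent appearing there), and define $\sigma$ by $\sigma(X_i) = X_i + X_n^{N_i}$ for $i < n$, $\sigma(X_n) = X_n$, $\sigma(Y) = Y$. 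A monomial of total $X$-degree $D'$ is sent to one of total $X$-degree at most $D' \cdot \max_i N_i$, so $\sigma$ preserves the growth condition, extends to an automorphism of $F[X;Y,\lambda]$ with inverse $X_i \mapsto X_i - X_n^{N_i}$, and sends $g$ to an element whose highest $X_n$-power modulo $Y$ comes from the unique maximal-weight monomial and has coefficient in $F^\times$, making $\sigma(g)$ $X_n$-distinguished of degree $w(\mu^*)$.

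For part (4), I induct on $n$; the base case $n = 0$ gives $F[[Y]]$, a discrete valuation ring. In the inductive step, set $A = F[X_1,\ldots,X_{n-1}; Y, \lambda]$ and $B = F[X_1,\ldots,X_n; Y, \lambda]$. Given a nonzero ideal $I \subseteq B$, use the $Y$-adic separatedness of $B$ to factor out the largest power of $Y$ and reduce to the case $I \not\subseteq YB$; pick $g \in I$ with $g \notin YB$, apply a preliminary change $X_i \mapsto X_i + X_n^{M_i}$ if needed to place $g$ in the hypothesis of part (3), and use parts (2)--(3) to replace $g$ by a distinguished monic polynomial $\omega \in A[X_n] \cap I$. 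The Euclidean algorithm identifies $B/(\omega)$ with the free $A$-module $A[X_n]/(\omega) \cong A^s$, which is noetherian by the inductive hypothesis, so $I/(\omega)$ is finitely generated as an $A$-module and its lifts together with $\omega$ generate $I$. For factoriality, since $B$ is a noetherian domain it suffices to show that every irreducible is prime: an irreducible $g$ is either associate to $Y$ (prime because $B/YB = F[X_1,\ldots,X_n]$ is a domain) or, after automorphism, of the form $e\omega$ with $\omega \in A[X_n]$ an irreducible distinguished polynomial. Gauss's lemma---applicable because $A$ is factorial by induction---makes $\omega$ prime in $A[X_n]$, and uniqueness of the Euclidean remainder promotes this to primality in $B$: if $\omega \mid ab$ in $B$, dividing $a$ and $b$ by $\omega$ yields remainders $r_a, r_b \in A[X_n]$ of $X_n$-degree less than $s$, and the $B$-divisibility $\omega \mid r_a r_b$ together with uniqueness of the Euclidean decomposition forces $\omega \mid r_a r_b$ already in $A[X_n]$.
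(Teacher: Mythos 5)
Your outline for parts (2)--(4) follows the paper's route closely and is sound; in particular the degree estimate in part (3) showing that $\sigma$ preserves the growth condition is a clean version of the paper's automorphism lemma, and the noetherian/factorial induction in part (4) is essentially the one in the paper. The gap is in part (1), the Euclidean algorithm, which is where all the technical content lives. You propose to solve the recursion
\[
q^{(k)}g^{(0)} + r^{(k)} \;=\; f^{(k)} - \sum_{i+j=k,\,j\geq 1}q^{(i)}g^{(j)}
\]
layer by layer in the $Y$-adic filtration by ordinary polynomial division by $g^{(0)} = g|_{Y=0}$, and then establish $\deg_X q^{(k)} \leq C'\lambda(k)$ by ``an induction on $k$ that tracks both the total $X$-degree and the $X_n$-degree.'' That induction does not close, because polynomial division by $g^{(0)}$ can inflate total $X$-degree by an amount proportional to the $X_n$-degree of the dividend. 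Concretely, if $g^{(0)} = X_n^s + X_1^tX_n^{s-1}$ and you divide something of $X_n$-degree $D$ by it, each of the $D-s$ successive reduction steps raises the total degree by $t-1$, and the quotient ends up with total $X$-degree on the order of $(D-s)t$. In the recursion, $D$ is the $X_n$-degree of the right-hand side at stage $k$, which already grows like $\lambda(k)$, so the resulting bound on $\deg_X q^{(k)}$ picks up an extra multiplicative factor of order $t$; the constant multiplying $\lambda(k)$ compounds with $k$ instead of stabilizing. Subadditivity of $\lambda$ does nothing to repair this: the blowup is in the constant in front of $\lambda(k)$, not in $\lambda$ itself.

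The paper's fix is precisely the content you skip over: it introduces the Banach subalgebras $F[X;Y,\lambda]_c$ with weighted norms $\|f\|_{\lambda,c} = \max_\mu |f_\mu|_\lambda\, p^{c\cdot\mu}$, chooses $c_1,\ldots,c_{n-1}$ small enough relative to $c_n$ and the $X'$-degrees of the low part $\alpha(g)$ so that $\|\alpha(g)/\tau(g)\|_{\lambda,c} < p^{c_ns}$, and then observes that $\tau\circ h$ is a strict contraction on $(F[X;Y,\lambda]_c, \|\cdot\|_{\lambda,c})$, so the Neumann series for $(I+\tau\circ h)^{-1}$ converges and the quotient and remainder land automatically in $F[X;Y,\lambda]_c \subseteq F[X;Y,\lambda]$. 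If you want to keep the layer-by-layer recursion, you must track the weighted degree $c_1\mu_1 + \cdots + c_n\mu_n$, not the raw total degree, with $c_1,\ldots,c_{n-1}$ chosen small so that the weighted-degree gain from each reduction step by $g^{(0)}$ is strictly less than the $c_ns$ saved by dropping the factor $X_n^s$. That freedom to shrink the $X'$-weights is the device your sketch is missing, and without it the key estimate $\deg_X q^{(k)} \leq C'\lambda(k)$ is not established.
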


The Euclidean algorithm is the key part of this theorem. Our proof of this algorithm follows Manin's proof of the analogous result for power series rings as written in Lang \cite{Lang}, except that we have to keep careful track of more delicate estimates that arise from the general growth function $\lambda(x)$. The other results are classical consequences of this algorithm, which are proved in this paper, but the techniques are essentially unchanged from techniques utilized in proofs of analogous results for power series rings as given in Bosch, etc. \cite{Bosch}.

This topic is also motivated by a considerable body of work concerning ``$k$-affinoid'' algebras from non-Archimedean
analysis.  Let $k$ be a complete non-Archimedean valued field, with a non-trivial valuation, and define $T_n = k\la X_1, \cdots, X_n \ra$, Tate's algebra, to be the algebra of strictly convergent power series over $k$: $T_n = \{ \sum_{\mu} a_\mu X^\mu : |a_\mu| \stackrel{|\mu| \rightarrow \infty}{\rightarrow}  0 \}$. The algebra, $T_n$, is a noetherian and factorial ring with many useful properties, and it is the basis for studying $k$-affinoid algebras, see Bosch etc \cite{Bosch}. A $k$-algebra, $A$, is called $k$-affinoid if there exists a continuous epimorphism, $T_n \rightarrow A$, for some $n \geq 0$. Given $\rho = (\rho_1, \cdots, \rho_n)$ in $\mr^n$, where $\rho_i > 0$ for each $i$, one can define
\begin{eqnarray*}
T_{n}(\rho) &=& \{ \sum_{\mu} a_\mu X^{\mu} \in k[[X_1, \cdots, X_n]] : |a_\mu|\rho_1^{\mu_1}\cdots \rho_n^{\mu_n} \stackrel{|\mu| \rightarrow \infty} {\rightarrow} 0\}.
\end{eqnarray*}
Note that $T_n(1,\cdots, 1) = T_n$. Furthermore, $T_n(\rho)$ is $k$-affinoid if, and only if, $\rho_i$ is an element of $|k_a^\ast|$ for all $i$, where $k_a$ is the algebraic closure of $k$, from which one can immediately verify that it is noetherian. It is shown by van der Put in \cite{Put} that this ring is noetherian for any $\rho$ in $\mr^n$, where $\rho_i > 0$ for each $i$. Define the Washnitzer algebra $W_n$ to be
\begin{eqnarray*}
 W_n &=& \bigcup_{\rho\in \mr^n, \rho_i > 1} T_n(\rho).
\end{eqnarray*}
It is shown in G\"{u}ntzer \cite{G} that $W_n$ is noetherian and factorial. A motivating study of $W_n$ is given by Grosse-K\"{o}nne \cite{Gr}. This overconvergent ring $W_n$ is also the basis (or starting point) of the Monsky-Washnitzer formal cohomology and the rigid cohomology.

More generally, for a growth function $\lambda(x)$, we can also define
\begin{eqnarray*}
 T_n(\rho, \lambda) &=& \{ \sum_{\mu} a_\mu X^\mu \in k[[X_1, \cdots, X_n]] : |a_\mu|\rho_1^{\lambda^{-1}(\mu_1)}\cdots \rho_n^{\lambda^{-1}(\mu_n)} \stackrel{|\mu| \rightarrow \infty}{\rightarrow} 0 \}.
\end{eqnarray*}
Similarly, define
\begin{eqnarray*}
 W_n(\lambda) &=& \bigcup_{\rho\in \mr^n, \rho_i > 1} T_n(\rho,
 \lambda).
\end{eqnarray*}
If $\lambda$ is invertible, $W_n(\lambda)$ most closely resembles the ring $k[X; Y, \lambda] = k[X_1, \cdots, X_n; Y, \lambda]$ studied in this paper. If $\lambda(x) = cx$ for some $c > 0$ and all $x$ in $\mr_{\geq 0}$, then $W_n(\lambda) = W_n$ is the Washnitzer algebra. Similarly, $T_n((1, \cdots, 1), \lambda) = T_n$ for all $\lambda$, and $T_n(\rho, \text{id}) = T_n(\rho)$.

The results of this paper suggest that there may be a $p$-adic cohomology theory for more general growth functions $\lambda(x)$ (other than linear functions), which would help to explain the principal zeroes of Dwork's unit root zeta function \cite{Dw}, \cite{Wan2} in the case when $\lambda$ is the exponential function. This is one of the main motivations for the present paper.

{\bf Acknowledgments}. We would like to thank Christopher Davis for informing us of several relevant references.

\section{Results and Proofs}
For the rest of the paper, we assume that $F$ is a field and that $p$ is a fixed positive real number greater than one.
\begin{defi}
 Define $| \; |_\lambda$ on $F[[Y]]$ by $| f(Y) |_\lambda = \frac{1}{p^{\lambda(\ord_Y(f))}}$ for all $f$ in $F[[Y]]$.
\end{defi}

This is basically the $Y$-adic absolute value on $F[[Y]]$, re-scaled by the growth function $\lambda(x)$.

\begin{prop}
 $(F[[Y]], | \; |_\lambda$) is a complete normed ring.
\end{prop}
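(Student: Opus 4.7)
The plan is to verify the three basic norm axioms for $|\cdot|_\lambda$ and then check completeness, all by reducing to the well-understood behavior of the $Y$-adic order function $\ord_Y$.

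First I would verify that $|\cdot|_\lambda$ is a (non-archimedean) norm. Non-degeneracy follows from the normalizations $\lambda(0) = 0$, $\lambda(\infty) = \infty$ and the strict monotonicity of $\lambda$: if $f \neq 0$ then $\ord_Y(f)$ is a finite nonnegative integer, so $\lambda(\ord_Y(f))$ is finite and $|f|_\lambda > 0$; if $f = 0$ then $\ord_Y(f) = \infty$ and $|f|_\lambda = p^{-\infty} = 0$. For the ultrametric triangle inequality, the standard estimate $\ord_Y(f+g) \geq \min(\ord_Y(f), \ord_Y(g))$ together with the fact that $\lambda$ is monotonic up gives
\[
\lambda(\ord_Y(f+g)) \;\geq\; \min\bigl(\lambda(\ord_Y(f)), \lambda(\ord_Y(g))\bigr),
\]
whence $|f+g|_\lambda \leq \max(|f|_\lambda, |g|_\lambda)$.

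The key step is submultiplicativity. Since $F[[Y]]$ is an integral domain, $\ord_Y(fg) = \ord_Y(f) + \ord_Y(g)$. Applying the standing hypothesis $\lambda(x) + \lambda(y) \leq \lambda(x+y)$ yields
\[
\lambda(\ord_Y(fg)) \;\geq\; \lambda(\ord_Y(f)) + \lambda(\ord_Y(g)),
\]
so $|fg|_\lambda \leq |f|_\lambda \cdot |g|_\lambda$. This is the only place where the superadditivity assumption on $\lambda$ is needed, and it is the main (albeit small) obstacle: without the hypothesis that $\lambda$ grows at least linearly, submultiplicativity could fail.

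For completeness, I would observe that the topology on $F[[Y]]$ induced by $|\cdot|_\lambda$ coincides with the standard $Y$-adic topology: because $\lambda$ is strictly increasing with $\lambda(\infty) = \infty$, one has $|f_n|_\lambda \to 0$ if and only if $\ord_Y(f_n) \to \infty$. Thus a sequence is Cauchy for $|\cdot|_\lambda$ if and only if it is Cauchy for the $Y$-adic absolute value, and the classical completeness of $F[[Y]]$ in the $Y$-adic topology finishes the proof. If one prefers a direct argument, given a Cauchy sequence $\{f_n\}$ one notes that for every integer $k \geq 0$ the coefficient of $Y^k$ in $f_n$ is eventually constant in $n$; assembling these stable coefficients into a single power series $f \in F[[Y]]$ produces the desired limit.
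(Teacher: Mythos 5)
Your proposal is correct and follows essentially the same route as the paper's proof: non-degeneracy from the normalization of $\lambda$, the ultrametric inequality from $\ord_Y(f+g)\geq\min(\ord_Y(f),\ord_Y(g))$, submultiplicativity from $\ord_Y(fg)=\ord_Y(f)+\ord_Y(g)$ together with superadditivity of $\lambda$, and completeness by identifying $|\cdot|_\lambda$-Cauchy sequences with $Y$-adic Cauchy sequences. No gaps.
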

\begin{proof}
 We defined $\lambda(0) = 0$ and $\lambda(\infty) = \infty$, so $|a|_\lambda = 0$ if, and only if, $a = 0$ (because $\lambda$ is strictly increasing), and $|c_0| = 1$ for all $c_0$ in $F^\times$. Suppose that $f$ and $g$ are elements of $F[[Y]]$, then $\ord_Y(fg) = \ord_Y(f) + \ord_Y(g)$, and $\lambda(\ord_Y(f) + \ord_Y(g)) \geq \lambda(\ord_Y(f)) + \lambda(\ord_Y(g))$. Thus,
\begin{eqnarray*}
 |fg|_\lambda &=& \frac{1}{p^{\lambda(\ord_Y(f) + \ord_Y(g))}} \\
&\leq& \frac{1}{p^{\lambda(\ord_Y(f))+\lambda(\ord_Y(g))}} \\
&=& |f|_\lambda |g|_\lambda.
\end{eqnarray*}
Similarly, since $\ord_Y(f+g) \geq \min\{\ord_Y(f), \ord_Y(g)\}$
\begin{eqnarray*}
 |f+g|_\lambda &=& \frac{1}{p^{\lambda(\ord_Y(f+g))}} \\
&\leq& \frac{1}{p^{\lambda(\min\{\ord_Y(f), \ord_Y(g)\})}} \\
&=& \max\{|f|_\lambda, |g|_\lambda\}.
\end{eqnarray*}

 To show completeness, if $\left(f^{(i)}\right)_{i=1}^\infty$ is a Cauchy sequence with respect to the standard $Y$-adic norm $|\;|$ on $F[[Y]]$, then $f^{(i)}$ converges to an element $f$ in $F[[Y]]$.  Thus, $|f - f^{(i)}| = \frac{1}{p^{\ord_Y(f-f^{(i)})}}$ converges to $0$ as $i$ approaches $\infty$, and so $\ord_Y(f - f^{(i)})$ must approach $\infty$.  This can happen if, and only if, the corresponding sequence, $\lambda(\ord_Y(f - f^{(i)}))$, approaches $\infty$, as desired.
\end{proof}

\begin{rem} A norm which only satisfies $|ab| \leq |a||b|$, instead of strict equality is sometimes called a pseudo-norm; we disregard the distinction in this paper.
\end{rem}

We can write any element $f$ in $F[X; Y, \lambda]$ in the following form:
\begin{eqnarray*}
 f(X,Y) &=& \sum_{\mu} f_\mu(Y)X^\mu
\end{eqnarray*}
where $\mu = (\mu_1, \cdots, \mu_n)$ is a tuple of positive integers, and $X^\mu = X_1^{\mu_1}\cdots X_n^{\mu_n}$. This form and the above norms allow us to formulate two equivalent definitions for $F[X; Y, \lambda]$:
\begin{eqnarray*}
 F[X; Y, \lambda] &=& \{f = \sum_{\mu}^\infty f_{\mu}( Y)X^\mu : f_\mu  \in F[[Y]], |f_\mu| p^{\lambda^{-1}(C_f|\mu|)} \stackrel{|\mu| \rightarrow \infty}{\rightarrow} 0\} \\
&=& \{f = \sum_{\mu}^\infty f_{\mu}( Y)X^\mu : f_\mu  \in F[[Y]], |f_\mu|_\lambda p^{C_f|\mu|} \stackrel{|\mu| \rightarrow \infty}{\rightarrow} 0\}
\end{eqnarray*}
where $|\mu| = \mu_1 + \cdots + \mu_n$.
\begin{defi}
 For all $c$ in $\mr^{n}, c_i > 0$, define
\begin{eqnarray*}
 F[X; Y, \lambda]_c &=& \{\sum_{\nu} f_\mu X^\mu : \left| f_\mu \right|_\lambda p^{c\cdot \mu} \stackrel{|\mu| \rightarrow \infty}{\rightarrow} 0\}
\end{eqnarray*}
where $c \cdot \mu = c_1\mu_1 + \cdots + c_n\mu_n$.
\end{defi}
\begin{defi}
Define $\| \; \|_{\lambda, c}$ on $F[X; Y, \lambda]_c$ by
\begin{eqnarray*}
\|f\|_{\lambda, c} &=&  \max |f_\mu|_\lambda p^{c\cdot \mu}.
\end{eqnarray*}
\end{defi}
It's easy to see that $F[[Y]] \subset F[X; Y, \lambda]_c \subseteq F[X; Y, \lambda]_{c'}$ if $c_i' \leq c_i$ for $i = 1, \cdots, n$.
\begin{prop}
The function $\| \; \|_{\lambda, c}$ is a non-Archimedean norm on $F[X; Y, \lambda]_c$.
\end{prop}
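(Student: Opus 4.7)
The plan is to verify the three defining properties of a non-Archimedean (sub)multiplicative norm in turn: positive definiteness, the ultrametric (strong triangle) inequality, and submultiplicativity, each by reducing to the corresponding property of $|\;|_\lambda$ on $F[[Y]]$, which was established in the preceding proposition. First, I would note that $\|f\|_{\lambda,c}$ is well defined: since $|f_\mu|_\lambda p^{c\cdot\mu} \to 0$ as $|\mu| \to \infty$, only finitely many of these terms exceed any positive bound, so the maximum is attained. Positive definiteness is then immediate: $\|f\|_{\lambda,c} = 0$ forces $|f_\mu|_\lambda = 0$ for every $\mu$, hence $f_\mu = 0$ for every $\mu$ by the norm property of $|\;|_\lambda$, and so $f = 0$.

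For the ultrametric inequality, if $f = \sum f_\mu X^\mu$ and $g = \sum g_\mu X^\mu$, then $(f+g)_\mu = f_\mu + g_\mu$, and applying the non-Archimedean property of $|\;|_\lambda$ coefficient-wise gives
\begin{eqnarray*}
 |(f+g)_\mu|_\lambda p^{c\cdot \mu} &\leq& \max\{|f_\mu|_\lambda, |g_\mu|_\lambda\}\, p^{c\cdot\mu} \\
 &\leq& \max\{\|f\|_{\lambda,c}, \|g\|_{\lambda,c}\}.
\end{eqnarray*}
Taking the max over $\mu$ yields $\|f+g\|_{\lambda,c} \leq \max\{\|f\|_{\lambda,c}, \|g\|_{\lambda,c}\}$.

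The main (though still routine) step is submultiplicativity. Writing $(fg)_\mu = \sum_{\alpha + \beta = \mu} f_\alpha g_\beta$, I would apply the ultrametric and submultiplicative properties of $|\;|_\lambda$ to obtain
\begin{eqnarray*}
 |(fg)_\mu|_\lambda p^{c\cdot\mu} &\leq& \max_{\alpha+\beta=\mu} |f_\alpha|_\lambda |g_\beta|_\lambda\, p^{c\cdot\alpha} p^{c\cdot\beta} \\
 &\leq& \left( \max_\alpha |f_\alpha|_\lambda p^{c\cdot\alpha} \right) \left( \max_\beta |g_\beta|_\lambda p^{c\cdot\beta} \right) \\
 &=& \|f\|_{\lambda,c}\, \|g\|_{\lambda,c},
\end{eqnarray*}
and taking the max over $\mu$ gives $\|fg\|_{\lambda,c} \leq \|f\|_{\lambda,c}\|g\|_{\lambda,c}$. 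The only thing one must additionally check to keep the argument honest is that $fg$ lies in $F[X;Y,\lambda]_c$, i.e. that $|(fg)_\mu|_\lambda p^{c\cdot\mu} \to 0$ as $|\mu|\to\infty$; this follows from the same Cauchy-product estimate together with the decay of the coefficients of $f$ and $g$. I expect no genuine obstacle here: all of the difficulty has been absorbed into the norm property of $|\;|_\lambda$ proven earlier, and what remains is bookkeeping on the $X$-multi-index exponents $c\cdot\mu$, which behave additively under multiplication.
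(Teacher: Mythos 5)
Your proposal is correct and follows essentially the same route as the paper: the ultrametric inequality is checked coefficient-wise, and submultiplicativity comes from the Cauchy-product estimate $|(fg)_\sigma|_\lambda p^{c\cdot\sigma} \leq \max_{\mu+\nu=\sigma}|f_\mu|_\lambda|g_\nu|_\lambda p^{c\cdot(\mu+\nu)}$. Your additional remarks on the attainment of the maximum and on positive definiteness are points the paper leaves implicit (it simply notes that the norm restricts to $|\;|_\lambda$ on $F[[Y]]$), so they are a harmless, slightly more careful addition.
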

\begin{proof}
 On $F[[Y]]$, $\|\;\|_{\lambda, c}$ reduces to $| \; |_\lambda$.  Suppose that $f$ and $g$ are elements of $F[X; Y,\lambda]_c$. Then
\begin{eqnarray*}
 \|f + g\|_{\lambda, c} &=& \max_{\mu}\{ |f_\mu+g_\mu|_\lambda p^{c\cdot \mu}\}\\
&\leq& \max_{\mu}\{\max\{|f_\mu|_\lambda, |g_\mu|_\lambda\}p^{c\cdot \mu}\} \\
&\leq& \max\{ \|f\|_{\lambda, c}, \|g\|_{\lambda, c}\}.
\end{eqnarray*}
Next,
\begin{eqnarray*}
 \|fg\|_{\lambda, c} &=& \max_{\sigma}\{ \left|\left(\sum_{\mu + \nu = \sigma} f_\mu g_\nu\right) \right|_\lambda p^{c\cdot \sigma} \} \\
&\leq& \max_{\sigma}\{ \max_{\mu + \nu = \sigma}\{ |f_\mu g_\nu|_\lambda\} p^{c\cdot \sigma} \} \\
&\leq& \max_{\mu, \nu}\{ |f_\mu|_\lambda |g_\nu|_\lambda p^{c\cdot ( \mu + \nu)} \} \\
&=& \|f\|_{\lambda, c}\|g\|_{\lambda, c}.
\end{eqnarray*}
\end{proof}

\begin{prop}
 $\displaystyle F[X; Y, \lambda] = \bigcup_{c \in \mr^n, c_i> 0} F[X; Y, \lambda]_c$
\end{prop}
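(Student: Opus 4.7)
The plan is to prove the stated equality by checking the two inclusions separately, using the equivalent definition of $F[X; Y, \lambda]$ that was stated just above the definition of $F[X; Y, \lambda]_c$, namely
\begin{eqnarray*}
F[X; Y, \lambda] &=& \{f = \sum_{\mu} f_\mu(Y) X^\mu : f_\mu \in F[[Y]], |f_\mu|_\lambda p^{C_f |\mu|} \to 0 \text{ as } |\mu| \to \infty\}.
\end{eqnarray*}
The key observation is simply that for any constant $C > 0$, the linear form $c \cdot \mu$ with $c = (C, \dots, C)$ agrees with $C|\mu|$, and more generally that $c \cdot \mu$ and $|\mu|$ are comparable up to constants depending on the components of $c$.

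For the inclusion $\bigcup_c F[X; Y, \lambda]_c \subseteq F[X; Y, \lambda]$, I would take any $f = \sum_\mu f_\mu X^\mu \in F[X; Y, \lambda]_c$ and set $C_f = \min_{1 \le i \le n} c_i > 0$. Since $c_i \geq C_f$ for every $i$, we have $c \cdot \mu \geq C_f |\mu|$, so $|f_\mu|_\lambda p^{C_f |\mu|} \leq |f_\mu|_\lambda p^{c \cdot \mu}$, and the right-hand side tends to $0$ as $|\mu| \to \infty$ by hypothesis. Hence $f$ satisfies the defining condition of $F[X; Y, \lambda]$.

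For the reverse inclusion $F[X; Y, \lambda] \subseteq \bigcup_c F[X; Y, \lambda]_c$, I would start with $f \in F[X; Y, \lambda]$ with associated constant $C_f > 0$ and simply take $c = (C_f, \dots, C_f) \in \mr^n$ with all positive coordinates. Then $c \cdot \mu = C_f |\mu|$ exactly, so $|f_\mu|_\lambda p^{c \cdot \mu} = |f_\mu|_\lambda p^{C_f |\mu|} \to 0$, placing $f$ in $F[X; Y, \lambda]_c$.

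There is no substantive obstacle here; the entire content is the elementary inequality $C_f |\mu| \leq c \cdot \mu \leq (\max_i c_i)|\mu|$ and the fact that tending to zero is preserved under domination. The proposition is essentially bookkeeping, recording that the single-constant definition in $F[X; Y, \lambda]$ can equivalently be stratified by the vector-valued parameter $c$, which is exactly the structure needed to invoke the normed ring $(F[X; Y, \lambda]_c, \|\cdot\|_{\lambda, c})$ introduced just before.
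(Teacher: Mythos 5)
Your proof is correct and follows exactly the paper's argument: one direction takes $c = (C_f,\dots,C_f)$ so that $c\cdot\mu = C_f|\mu|$, and the other sets $C_f = \min_i c_i$ and uses $c\cdot\mu \geq C_f|\mu|$. You have merely written out the domination inequality that the paper leaves implicit.
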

\begin{proof}
Suppose that $f$ is an element of $F[X; Y, \lambda]$, then $|f_\mu|_\lambda p^{(C_f, \cdots, C_f)\cdot \mu}$ converges to $0$ as $|\mu|$ approaches $\infty$.  Conversely, if $f$ is an element of $F[X; Y, \lambda]_c$, let $C_f = \min_{i} c_i$.
\end{proof}

\begin{lem}
 Suppose $f$ is an element of $F[X; Y,\lambda]$. Then $f(X,Y)$ is invertible if, and only if, $f \equiv c_0 \mod (Y)$ where $c_0$ is a unit in $F$. If an element $f$ in $F[X; Y, \lambda]_c$ is invertible in $F[X; Y, \lambda]$, then $f^{-1}$ is an element of $F[X; Y, \lambda]_c$. Further, if $\|f\|_{\lambda, c} \leq 1$, then $\| f^{-1}\|_{\lambda, c} \leq 1$.
\end{lem}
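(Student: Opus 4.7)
The plan has three components, treating the three assertions in sequence.

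For the characterization of units, reduction modulo the ideal $(Y)$ identifies $F[X;Y,\lambda]/(Y)$ with the polynomial ring $F[X]$ via $f=\sum_{k\geq 0}f_k(X)Y^k\mapsto f_0(X)$, so that any unit $f\in F[X;Y,\lambda]$ descends to a unit $f_0\in F[X]^\times=F^\times$; this forces $f\equiv c_0\pmod Y$ with $c_0$ a unit in $F$. The converse will follow from the explicit construction below.

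For the formal inverse, I would normalize $c_0=1$ and write $f=1-g$ where $g\in F[X;Y,\lambda]_c$ has $\ord_Y g_\mu\geq 1$ for every $\mu$. Because $\ord_Y(g^j)_\mu\geq j$ in every $X^\mu$-coefficient, the series $h=\sum_{j=0}^\infty g^j$ converges $Y$-adically in $F[X][[Y]]$ and a direct computation gives $fh=1$. To verify $h\in F[X;Y,\lambda]_c$ I would estimate $|h_\mu|_\lambda p^{c\cdot\mu}\leq\max_j|(g^j)_\mu|_\lambda p^{c\cdot\mu}$ by expanding
\begin{equation*}
(g^j)_\mu=\sum_{\mu_1+\cdots+\mu_j=\mu}g_{\mu_1}\cdots g_{\mu_j}
\end{equation*}
and using two complementary estimates. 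The sub-multiplicative product bound gives $|(g^j)_\mu|_\lambda p^{c\cdot\mu}\leq\max_{\mu_1+\cdots+\mu_j=\mu}\prod_i|g_{\mu_i}|_\lambda p^{c\cdot\mu_i}$; combined with the observation that at least one index in each decomposition satisfies $|\mu_i|\geq|\mu|/j$ and that $g\in F[X;Y,\lambda]_c$ forces $|g_{\mu'}|_\lambda p^{c\cdot\mu'}\to 0$ as $|\mu'|\to\infty$, this yields decay in $|\mu|$ for $j$ moderate. The $Y$-order estimate $|(g^j)_\mu|_\lambda\leq p^{-\lambda(j)}$, a consequence of super-additivity $\lambda(x)+\lambda(y)\leq\lambda(x+y)$ together with $\ord_Y g_{\mu_i}\geq 1$, dominates for $j$ large. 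Splitting the maximum over $j$ at the crossover of the two bounds and using the at-least-linear growth of $\lambda$ then yields $|h_\mu|_\lambda p^{c\cdot\mu}\to 0$.

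For the norm refinement, under the extra hypothesis $\|f\|_{\lambda,c}\leq 1$ one has $|c_0|_\lambda=1$, so by the ultrametric inequality $\|g\|_{\lambda,c}=\|f-c_0\|_{\lambda,c}\leq 1$. Then every factor in the product bound above is at most $1$, giving $|(g^j)_\mu|_\lambda p^{c\cdot\mu}\leq\|g\|_{\lambda,c}^j\leq 1$ uniformly in $j$, and the ultrametric inequality applied to $h_\mu=\sum_j(g^j)_\mu$ yields $\|h\|_{\lambda,c}\leq 1$.

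The main obstacle is the membership claim: producing genuine decay of $|h_\mu|_\lambda p^{c\cdot\mu}$ rather than merely a uniform upper bound requires simultaneously exploiting the decay of $|g_{\mu'}|_\lambda p^{c\cdot\mu'}$ with $|\mu'|$ and the $Y$-order bound on $(g^j)_\mu$, with the super-additivity of $\lambda$ entering in an essential way when the two estimates are balanced across the range of $j$.
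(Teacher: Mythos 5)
Your plan follows essentially the same route as the paper's proof: reduce modulo $(Y)$ (so that a unit of $F[X;Y,\lambda]$ maps to a unit of $F[X]$, forcing $f\equiv c_0$ with $c_0\in F^\times$), invert via the geometric series $c_0^{-1}\sum_j g^j$, and control the coefficients termwise using the ultrametric and submultiplicative properties of $\|\cdot\|_{\lambda,c}$. Your first and fourth steps are correct and match the paper, which, for what it is worth, dispatches the membership claim in a single sentence (``converges to $0$ \ldots because $g_\mu=-c_0f_\mu$'') with no more detail than you give.

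The gap sits exactly where you flag it, and your crossover argument does not close it. For ``moderate'' $j$ your product bound is of the shape $\delta(|\mu|/j)\cdot\|g\|_{\lambda,c}^{j-1}$, where $\delta(m)=\sup_{|\mu'|\ge m}|g_{\mu'}|_\lambda p^{c\cdot\mu'}$; when $\|g\|_{\lambda,c}>1$ this grows with $j$. The $Y$-order bound, once the weight $p^{c\cdot\mu}$ is restored, reads $|(g^j)_\mu|_\lambda p^{c\cdot\mu}\le p^{-\lambda(j)+c\cdot\mu}$ and is only small when $\lambda(j)$ exceeds $c\cdot\mu$, i.e.\ when $j$ is at least of order $|\mu|$ if $\lambda$ is linear. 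In the intermediate range of $j$ neither bound decays, and this is not a bookkeeping problem: the middle assertion of the lemma is false for the given $c$. Take $n=1$, $\lambda(x)=x$, $c=1$, $f=1-YX^2$. Then $f\in F[X;Y,\lambda]_1$ and $f$ is a unit of $F[X;Y,\lambda]$, but $f^{-1}=\sum_k Y^kX^{2k}$ satisfies $|Y^k|_\lambda p^{2k}=p^{k}\to\infty$, so $f^{-1}\notin F[X;Y,\lambda]_1$ (it lies only in $F[X;Y,\lambda]_{c'}$ for $c'<1/2$). Even the hypothesis $\|f\|_{\lambda,c}\le1$ buys only boundedness of the coefficients of $f^{-1}$, not the decay required for membership: $f=1-YX$ has $\|f\|_{\lambda,1}=1$ while $|Y^k|_\lambda p^{k}\equiv1$. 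The correct and usable statement is obtained after shrinking $c$: every $g_\mu$ lies in $(Y)$, so $|g_\mu|_\lambda\le p^{-\lambda(1)}<1$, and since only finitely many $\mu$ have $|g_\mu|_\lambda p^{c\cdot\mu}\ge p^{-\lambda(1)/2}$, a small enough $c'\le c$ gives $\theta:=\|g\|_{\lambda,c'}<1$. Your two-bound idea then works cleanly in the rescaled norm: every decomposition of $\sigma$ into $j$ parts satisfies both $\prod_i|g_{\mu^{(i)}}|_\lambda p^{c'\cdot\mu^{(i)}}\le\theta^{j}$ and, by pigeonhole, $\le\delta(|\sigma|/j)$, whence $|h_\sigma|_\lambda p^{c'\cdot\sigma}\le\max\{\theta^K,\delta(|\sigma|/K)\}$ for every $K$, and choosing $K$ first and then letting $|\sigma|\to\infty$ gives the decay. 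You should prove (and use) the membership claim in this form -- inverse in $F[X;Y,\lambda]_{c'}$ for some smaller $c'$, or for the given $c$ under the stronger assumption $\|g\|_{\lambda,c}<1$ -- rather than as literally stated.
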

\begin{proof}
If $f(X,Y)$ is invertible, then it is an invertible polynomial modulo $(Y)$. Therefore, $f$ is a non-zero unit modulo $(Y)$.

If $f \equiv c_0 \mod (Y)$ for $c_0$ in $F^\times$, we can write $f = c_0(1 - g(X,Y))$ as an element of $F[X;Y,\lambda]_c$ for some $c>0$. Then
\begin{eqnarray*}
 f^{-1} &=&  c_0^{-1}\left(1 + \sum_{k=1}^{\infty}g(X,Y)^k\right) \\
&=& c_0^{-1}\left(1 + \sum_{k=1}^\infty\sum_{j=1}^\infty
\sum_{\substack{\mu^{(1)} + \cdots + \mu^{(k)} = \sigma \\
|\sigma| = j} }\prod_{i=1}^k g_{\mu^{(i)}}(Y) X^\sigma\right)
\end{eqnarray*}
 Observe that,
\begin{eqnarray*}
 \left(\left|\prod_{i=1}^k g_{\mu^{(i)}}(Y) X^\sigma\right|_\lambda\right) p^{c\cdot \sigma} &=& \prod_{i=1}^k \left(\left|g_{\mu^{(i)}}(Y) X^\sigma\right|_\lambda p^{c\cdot\mu^{(i)}}\right)
\end{eqnarray*}
 converges to $0$ as $|\sigma|$ approaches $\infty$ because $g_\mu = -c_0f_\mu$.  Suppose $\|f\|_{\lambda, c} \leq 1$, then this product is also less than or equal to one, because each term satisfies this property, so $\|f^{-1}\|_{\lambda, c} = |c_0^{-1}|_\lambda = 1$.

\end{proof}
\begin{prop}
The ring,  $(F[X; Y, \lambda]_c, \| \; \|_{\lambda, c})$, is an $F[[Y]]$-Banach algebra.
\end{prop}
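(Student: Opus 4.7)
The plan is to leverage the previous proposition, which already establishes that $\|\cdot\|_{\lambda, c}$ is a submultiplicative non-Archimedean norm. Combined with the inclusion $F[[Y]] \subset F[X; Y, \lambda]_c$ noted just before it (under which the restriction of $\|\cdot\|_{\lambda, c}$ agrees with $|\cdot|_\lambda$), this immediately makes $F[X; Y, \lambda]_c$ into a normed $F[[Y]]$-algebra. The only remaining issue is completeness, and the rest of the proof is devoted to it.

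Given a Cauchy sequence $(f^{(n)})_{n \geq 1}$ with $f^{(n)} = \sum_\mu f^{(n)}_\mu(Y) X^\mu$, I would first extract coefficient-wise limits. Since $|f^{(n)}_\mu - f^{(m)}_\mu|_\lambda \, p^{c \cdot \mu} \leq \|f^{(n)} - f^{(m)}\|_{\lambda, c}$ for every $\mu$, each coefficient sequence $(f^{(n)}_\mu)_n$ is Cauchy in $(F[[Y]], |\cdot|_\lambda)$, so by the earlier completeness proposition the limit $f_\mu := \lim_n f^{(n)}_\mu$ exists in $F[[Y]]$. Form the candidate series $f := \sum_\mu f_\mu(Y) X^\mu$. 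The task is then to verify that $f$ lies in $F[X; Y, \lambda]_c$ and that $f^{(n)} \to f$ in norm. Given $\varepsilon > 0$, pick $N$ so that $\|f^{(n)} - f^{(m)}\|_{\lambda, c} < \varepsilon$ for all $n, m \geq N$; letting $m \to \infty$ in the coefficient-wise inequality yields $|f^{(n)}_\mu - f_\mu|_\lambda \, p^{c \cdot \mu} \leq \varepsilon$ for every $\mu$ and every $n \geq N$. Fixing such an $n$ and using that $f^{(n)} \in F[X; Y, \lambda]_c$, I would choose $M$ with $|f^{(n)}_\mu|_\lambda \, p^{c \cdot \mu} < \varepsilon$ for $|\mu| > M$; the non-Archimedean triangle inequality then forces $|f_\mu|_\lambda \, p^{c \cdot \mu} \leq \varepsilon$ for $|\mu| > M$, so $f \in F[X; Y, \lambda]_c$, and the bound $\|f^{(n)} - f\|_{\lambda, c} \leq \varepsilon$ follows at once.

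I expect the only real subtlety to be the step where the coefficient-wise limit $f$ must be shown to actually live in $F[X; Y, \lambda]_c$ rather than merely be a formal series; this is precisely where the ultrametric character of $|\cdot|_\lambda$ is essential, and it is the sole genuine analytic point in what is otherwise a standard completeness argument modeled on the completeness proof for $(F[[Y]], |\cdot|_\lambda)$ given earlier.
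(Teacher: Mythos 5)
Your proof is correct and follows essentially the same route as the paper's: extract coefficient-wise limits in the complete ring $(F[[Y]],|\cdot|_\lambda)$, then use the non-Archimedean estimate $|f_\mu|_\lambda p^{c\cdot\mu}\leq\max\{|f_\mu-f^{(n)}_\mu|_\lambda p^{c\cdot\mu},\,|f^{(n)}_\mu|_\lambda p^{c\cdot\mu}\}$ to show the limit satisfies the decay condition and is the norm-limit (the paper merely normalizes by passing to a subsequence with $\|f^{(j)}-f^{(i)}\|_{\lambda,c}<1/i$, which is cosmetic). The one point you elide is the verification that $F[X;Y,\lambda]_c$ is closed under addition and multiplication --- i.e.\ that the decay condition $|f_\mu|_\lambda p^{c\cdot\mu}\to 0$ is preserved by sums and products, which the paper checks explicitly at the start of its proof and which does not literally follow from the statement of the preceding proposition (finiteness of the norm does not imply decay), though it does follow immediately from the same term-by-term estimates used there.
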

\begin{proof}
  Suppose that $f = \sum_{\mu} f_\mu(Y)X^\mu$ and $g = \sum_{\nu} g_\nu(Y)X^\nu$, are elements of $F[X; Y, \lambda]_c$, then $|f_\mu \pm g_\mu|_\lambda \leq \max\{|f_\mu|_\lambda, |g_\mu|_\lambda\}$, and the quantity $\max\{|f_\mu|_\lambda p^{c\cdot \mu}, |g_\mu|_\lambda p^{c\cdot \mu}\}$ converges to 0 as $|\mu|$ approaches $\infty$.  Thus, $f+g$ is an element of $F[X; Y, \lambda]_c$.

 Similarly, we see that
\begin{eqnarray*}
 \left|\sum_{\mu + \nu = \sigma}f_\mu g_\nu\right|_\lambda \leq \max_{\mu + \nu = \sigma}\{|f_\mu|_\lambda\cdot |g_\nu|_\lambda\}
\end{eqnarray*}
and $\lim_{|\sigma| \rightarrow \infty} \max_{\mu + \nu = \sigma}\{|f_\mu|\cdot |g_\nu|p^{c\cdot \sigma}\} = 0$ as desired. Thus, $fg$ is an element of $F[X; Y, \lambda]_c$.

Now to prove that that this norm is complete, we let $\left(f^{(i)}\right)_{i=1}^\infty = \left(\sum_{\mu}^\infty f_\mu^{(i)} X^\mu\right)_{i=1}^\infty$ be a Cauchy sequence in $F[X; Y, \lambda]_c$.  Then we can choose a suitable subsequence of $\left(f^{(i)}\right)_{i=1}^\infty$ (because a Cauchy sequence is convergent if, and only if, it has a convergent subsequence) and assume that
\begin{eqnarray*}
 | f_\mu^{(j)} - f_\mu^{(i)} |_\lambda p^{c\cdot \mu} \leq \|f^{(j)} - f^{(i)}\|_{\lambda, c} < 1/i \quad \text{ for all } j > i > 0\text{ and all } |\mu|\geq 0.
\end{eqnarray*}
For all $j$ and $\mu$, $f_\mu^{(j)}$ is an element of $F[[Y]]$ which is complete, so there is an element $f_\mu$ in $F[[Y]]$ such that $f_\mu^{(j)}$ converges to  $f_\mu$ as $j$ approaches $\infty$. Define $f =\sum_{\mu} f_\mu X^\mu$.  We claim that $|f_\mu|_\lambda p^{c\cdot\mu}$ converges to $0$ as $|\mu|$ approaches $\infty$.

Note that $| \; |_\lambda$ is continuous, so $|f_\mu - f_\mu^{(i)}|_\lambda p^{c\cdot \mu} \leq 1/i$, for all $|\mu| \geq 0$ and all $i > 0$.  We choose $\mu$, such that $|\mu|$ is sufficiently large, so that $|f_\mu^{(i)}|_\lambda p^{c\cdot \mu} < 1/i$. Since the norm is non-Archimedean, this shows that $|f_\mu|_\lambda p^{c\cdot \mu} \leq 1/i$.  Thus, $|f_\mu|_\lambda p^{c\cdot \mu}$ converges to $0$ as $|\mu|$ approaches $\infty$. Hence, $f$ is an element of $F[X; Y, \lambda]_c$ and $\|f- f^{(i)}\|_{\lambda,c} = \max |f_\mu - f^{(i)}_\mu|_\lambda p^{c\cdot \mu} \leq  1/i$, therefore, $\lim_i f^{(i)} = f$.
\end{proof}
\begin{defi}
 A power series $f(X,Y) = \sum_{k=0}^\infty f_k(X,Y)X_n^k$ in $F[X; Y, \lambda]$ is called $X_n$-distinguished of degree $s$ in $F[X; Y, \lambda]$ if
\begin{enumerate}
 \item  $f_s(X,Y)$ is a unit in $F[X_1,\cdots, X_{n-1}; Y, \lambda]$ and
\item $|f_k(X, Y)| < 1$ for all $k > s$.
\end{enumerate}
Equivalently, $f \mod (Y)$ is a unitary polynomial in $X_n$ of degree $s$.

A power series $f(X, Y) = \sum_{k=0}^\infty f_k(X,Y)X_n^k$ in $F[X; Y, \lambda]_c$ is called $X_n$-distinguished of degree $s$ in $F[X; Y, \lambda]_c$ if
\begin{enumerate}
 \item $f_s(X,Y)$ is a unit in $F[X_1,\cdots, X_{n-1}; Y, \lambda]_c$ and
 $\|f_s(X,Y)\|_{\lambda, (c_1, \cdots, c_{n-1})} = 1$,
\item $\|f\|_{\lambda, c} = \|f_s(X,Y)X_n^{s}\|_{\lambda, c} =
p^{c_ns} > \|f_k(X,Y)X_n^k\|_{\lambda, c}$ for all $k \neq s$.
\end{enumerate}
\end{defi}

 If an element $f$ in $F[X; Y, \lambda]$ is $X_n$-distinguished of degree $s$ in $F[X; Y, \lambda]$, then it is $X_n$-distinguished in $F[X; Y, \lambda]_c$ for some $c$ in $\mr^n.$  Indeed, suppose that $f$ is an element of $F[X; Y,\lambda]_c$. Since $f_s(X,Y)$ is a unit, we can write $f_s(X,Y) = u + h$, where $u$ is a unit in $F[[Y]]$, and $h$ is an element of $(Y)$.  By choosing $c_1, \cdots, c_{n-1}$ small enough, we can make $\|h\|_{\lambda, c} < 1$, and so $\|f_s(X,Y)\|_{\lambda,  (c_1, \cdots, c_{n-1})} = 1$.  We can reduce $c_1, \cdots, c_{n-1}$ even further to ensure that $\|f_k(X,Y)X_n^{k}\|_{\lambda, c} < \|f_s(X,Y)X_n^s\|_{\lambda, c} = p^{c_ns}$, because $f_k$ is an element of $(Y)$ for all $k > s$.  Now, to ensure that $\|f_k(X,Y)X_n^{k}\|_{\lambda, c} < \|f_s(X,Y)X_n^s\|_{\lambda, c} = p^{c_ns}$, for $k < s$, we can shrink $c_1, \cdots, c_{n-1}$ once again so that $\|f_k(X,Y)\|_{\lambda, c} < p^{c_n}$.  In this way we find that for all $k < s$, $\|f_k(X,Y)X_n^k\|_{\lambda, c} < p^{c_n} p^{c_n(s-1)} =  p^{c_ns} = \|f_s(X,Y)X_n^{s}\|_{\lambda, c}$ as desired.

We can use the notion of $X_n$-distinguished elements to derive a Euclidean algorithm for $F[X; Y, \lambda]_c$.  This Euclidean algorithm will then produce Weierstrass factorization for $X_n$-distinguished elements, which will allow us to deduce that $F[X; Y, \lambda]$ is noetherian and factorial.
\begin{thm}
 Let
\begin{eqnarray*}
 g &=& \sum_{k=0}^\infty g_k(X,Y)X_n^k
\end{eqnarray*}
be $X_n$-distinguished of degree $s$ in $F[X; Y, \lambda]_c$. Then every $f$ in $F[X; Y, \lambda]_c$ can be written uniquely in the form
\begin{eqnarray*}
 f = qg + r
\end{eqnarray*}
where $q$ is and element of $F[X; Y, \lambda]_c$ and $r$ is a polynomial in $F[X_1, \cdots, X_{n-1}; Y, \lambda]_c[X_n]$, with $\deg_{X_n}(r) < s$. Further, if $f$ and $g$ are polynomials in $X_n$, then so are $q$ and $r$.
\end{thm}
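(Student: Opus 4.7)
The plan is to cast the division equation as a fixed-point problem on the Banach algebra $(F[X;Y,\lambda]_c, \|\cdot\|_{\lambda,c})$ and solve it by a contracting geometric series, following the Manin/Lang strategy adapted to the present non-Archimedean setting. First I would normalize $g$. The distinguished hypothesis says $g_s$ is a unit $u$ in $F[X_1,\ldots,X_{n-1};Y,\lambda]_{c'}$ (with $c'=(c_1,\ldots,c_{n-1})$) of norm $1$; by the preceding invertibility lemma applied in that Banach algebra, $\|u^{-1}\|_{\lambda,c'}=1$ as well. Writing $u^{-1}g = X_n^s+\tilde h$ with $\tilde h = u^{-1}\sum_{k\neq s}g_k X_n^k$, the second distinguished condition gives $\|\tilde h\|_{\lambda,c}<p^{c_n s}$, so $\theta:=p^{-c_n s}\|\tilde h\|_{\lambda,c}$ lies strictly below $1$. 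Since multiplication by $u^{-1}$ is a norm-preserving bijection stabilising the subspace of polynomials in $X_n$ of degree $<s$, it suffices to solve $f=q\tilde g+\tilde r$ with $\tilde g = X_n^s+\tilde h$ and $\tilde r = u^{-1}r$.

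Next I introduce two truncation operators. For $h=\sum_k h_k(X_1,\ldots,X_{n-1},Y)X_n^k$ in $F[X;Y,\lambda]_c$, set
$$\tau(h)=\sum_{k<s}h_k X_n^k,\qquad \alpha(h)=\sum_{k\ge s}h_k X_n^{k-s},$$
so $h=X_n^s\alpha(h)+\tau(h)$. These are continuous $F[X_1,\ldots,X_{n-1};Y,\lambda]_{c'}$-linear; term-by-term inspection of the non-Archimedean norm gives $\|\tau(h)\|_{\lambda,c}\le\|h\|_{\lambda,c}$ and, crucially, $\|\alpha(h)\|_{\lambda,c}\le p^{-c_n s}\|h\|_{\lambda,c}$. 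Applying these with $h=f-\tilde h q$ and using uniqueness of the decomposition $h=X_n^s\alpha(h)+\tau(h)$, the equation $f=q\tilde g+\tilde r$ with $\deg_{X_n}\tilde r<s$ becomes the pair $q=\alpha(f)-\alpha(\tilde h q)$ and $\tilde r=\tau(f-\tilde h q)$. Defining $T(q):=\alpha(\tilde h q)$, the estimate $\|T(q)\|_{\lambda,c}\le p^{-c_n s}\|\tilde h\|_{\lambda,c}\|q\|_{\lambda,c}=\theta\|q\|_{\lambda,c}$ shows $T$ is a strict contraction on the Banach algebra.

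Since the space is complete, $(I+T)^{-1}=\sum_{k\ge 0}(-T)^k$ converges in operator norm, yielding the unique $q=(I+T)^{-1}\alpha(f)$; then $\tilde r:=\tau(f-\tilde h q)$ is by construction a polynomial in $X_n$ of degree $<s$, and setting $r=u\tilde r$ recovers the required decomposition. Uniqueness is immediate from the contraction: any second pair $(q',r')$ gives $(I+T)(q-q')=0$, whence $\theta<1$ forces $q=q'$ and then $r=r'$. For the polynomial assertion, classical long division in $F[X_1,\ldots,X_{n-1};Y,\lambda]_{c'}[X_n]$ terminates because the leading coefficient of $g$ is a unit, and by the uniqueness just proved it coincides with the series solution. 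The principal obstacle is the contraction bound $\|T\|\le\theta<1$: this is exactly where the strict inequalities in the distinguished definition in $F[X;Y,\lambda]_c$ (as opposed to the weaker version in $F[X;Y,\lambda]$) are indispensable, and is the reason one must work in the localized Banach algebra at $c$ and only afterwards pass back to $F[X;Y,\lambda]$.
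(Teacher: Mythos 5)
Your proof is correct and follows essentially the same route as the paper: both recast the division as inverting an operator of the form $I + (\text{truncation past degree } s)\circ(\text{multiplication by an element of norm} < p^{c_n s})$ on the Banach algebra $F[X;Y,\lambda]_c$ and sum the resulting geometric series, the only cosmetic difference being that you normalize by the unit $g_s$ and solve for $q$ directly, whereas the paper inverts the whole upper part $\tau(g)$ and solves for $M = q\tau(g)$. One bookkeeping slip worth fixing: with $\tilde g = u^{-1}g$, the identity $f = q\tilde g + \tilde r$ reads $f = (qu^{-1})g + \tilde r$, so the remainder for the original problem is $\tilde r$ itself and the quotient is $qu^{-1}$, not $r = u\tilde r$ with quotient $q$ (harmless, since $u$ is a norm-one unit preserving the space of $X_n$-polynomials of degree $<s$, but as written your equations solve the division of $uf$ by $g$).
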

\begin{proof}

Let $\alpha$, $\tau$ be projections given by,
\begin{eqnarray*}
 \alpha : \sum_{k=0}^\infty g_k(X, Y)X_n^k &\mapsto& \sum_{k=0}^{s-1} g_k(X,Y)X_n^k\\
\tau : \sum_{k=0}^\infty g_k(X,Y)X_n^k &\mapsto& \sum_{k=s}^\infty g_k(X,Y)X_n^{k-s} \\
\end{eqnarray*}
We see that $\tau(w)$ and $\alpha(w)$ are elements of $F[X;Y,\lambda]_c$, and that $\tau(wX_n^{s}) = w$. It is also clear that $\tau(w) = 0$ if, and only if, $\deg_{X_n}( w) < s$, for all $w$ in $F[X; Y, \lambda]_c$.

 Such $q$ and $r$ exist if, and only if, $\tau(f) = \tau(qg)$.  Thus, we must solve,
\begin{eqnarray*}
 \tau(f) = \tau(q\alpha(g)) + \tau(q\tau(g)X_n^{s}) = \tau(q\alpha(g)) + q\tau(g).
\end{eqnarray*}
Note that $\tau(g)$ is invertible, trivially, because it is congruent to a unit modulo $(Y)$.  Let $M = q\tau(g)$.  Thus, we can write
\begin{eqnarray*}
 \tau(f) = \tau\left(M\frac{\alpha(g)}{\tau(g)}\right) + M = \left(I + \tau\circ \frac{\alpha(g)}{\tau(g)}\right)M.
\end{eqnarray*}
We want to show that the map $\left(I + \tau\circ \frac{\alpha(g)}{\tau(g)}\right)^{-1}$ exists.

Suppose $z$ is an element of $F[X; Y, \lambda]_c.$  We first claim that $\|\tau(z)\|_{\lambda, c} \leq \frac{\|z\|_{\lambda, c}}{p^{c_ns}}$. Indeed, there exists $\mu$ such that
\begin{eqnarray*}
 \|z\|_{\lambda, c} &=& |z_\mu|_\lambda p^{c\cdot \mu} \\
&\geq& |z_\nu|_\lambda p^{c\cdot\nu} \quad \text{ for all } \nu
\end{eqnarray*}
Thus,
\begin{eqnarray*}
 \frac{\|z\|_{\lambda, c}}{p^{c_ns}} &=& |z_\mu|_\lambda p^{c\cdot \mu - c_ns} \\
&\geq& |z_\nu|_\lambda p^{c\cdot \nu - c_ns} \quad \text{ for all } \nu.
\end{eqnarray*}
The maximum over all $\nu$ with $\nu_n \geq s$ is equal to $\|\tau(z)\|_{\lambda, c}$, as asserted. Thus $\|\tau(g)\|_{\lambda, c} \leq \frac{p^{c_ns}}{p^{c_ns}} = 1$, so by the lemma 2.8 $\|\tau(g)^{-1}\| \leq 1$.  Let $h = \frac{\alpha(g)}{\tau(g)}$. Then,
\begin{eqnarray*}
 \|h\|_{\lambda, c} \leq \|\alpha(g)\|_{\lambda, c}\|\tau(g)^{-1}\|_{\lambda, c} < p^{c_ns}.
\end{eqnarray*}

Next we claim that $\|(\tau\circ h)^m (z)\|_{\lambda, c} < \frac{\|z\|_{\lambda, c}\|h\|_{\lambda, c}^{m}}{p^{mc_ns}}$, for all $m$ in $\mn$.  Indeed, $\|\tau(zh)\|_{\lambda, c} \leq \frac{\|zh\|_{\lambda, c}}{p^{c_ns}} \leq \frac{\|z\|_{\lambda, c}\|h\|_{\lambda, c}}{p^{c_ns}}$ by what we just proved.  Now, assume that this is true for $m$, then
\begin{eqnarray*}
 \|\tau\left((\tau\circ h)^{m}(z) h \right)\|_{\lambda, c} &\leq& \frac{\|(\tau\circ h)^m(z)\|_{\lambda, c}\|h\|_{\lambda, c}}{p^{c_ns}} \\
&\leq& \frac{\|z\|_{\lambda, c}\|h\|_{\lambda, c}^{m}}{p^{mc_ns}}\frac{\|h\|_{\lambda, c}}{p^{c_ns}} \\
&=& \frac{\|z\|_{\lambda, c}\|h\|_{\lambda, c}^{m+1}}{p^{(m+1)c_ns}}
\end{eqnarray*}
Now, we know that,
\begin{eqnarray*}
\left(I + \tau\circ h\right)^{-1}(z) &=& z + \sum_{m=1}^\infty (-1)^{m} (\tau\circ h)^m(z).
\end{eqnarray*}
Let $w^{(i)}(z) = z + \sum_{m=1}^i (-1)^{m} (\tau\circ h)^m(z)$. We claim that the sequence $\left(w^{(i)}(z)\right)_{i=1}^\infty$ is Cauchy for every $z$ in $F[X; Y, \lambda]_c$.  Indeed,
\begin{eqnarray*}
 \|w^{(i+1)}(z) - w^{(i)}(z)\|_{\lambda, c} &=& \|(\tau\circ h)^{i+1}(z)\|_{\lambda, c} \\
&\leq& \frac{\|z\|_{\lambda, c}\|h\|_{\lambda, c}^{i+1}}{p^{(i+1)c_ns}}.
\end{eqnarray*}
Since $\|h\|_{\lambda, c} < p^{c_ns}$, we see that this difference approaches $0$ as $i$ approaches $\infty$.  Since this norm is non-Archimedean, this is all we need to show.  Therefore, since $F[X; Y, \lambda]_c$ is complete, we see that $w(z) = \lim_i w^{(i)}(z)$ exists for every $z$ in $F[X; Y, \lambda]_c$.  Uniqueness is immediate from the invertibility of the map.

To prove the last statement, note that we could already carry out division uniquely in the ring $F[X_1, \cdots, X_{n-1}; Y,
\lambda]_c[X_n]$, by the polynomial Euclidean algorithm. Therefore, the division is unique in $F[X; Y, \lambda]_c$.
\end{proof}
\begin{cor}
 Suppose that $g$ is $X_n$-distinguished of degree $s$ in $F[X; Y, \lambda]$, and that $f$ is an element of $F[X; Y, \lambda]$.  Then there exist unique elements, $q$ in $F[X; Y, \lambda]$, and $r$ in the polynomial ring $F[X_1, \cdots, X_{n-1}; Y, \lambda][X_n]$ with ${\rm deg}_{X_n}(r) <s$, such that $f = qg + r$.
\end{cor}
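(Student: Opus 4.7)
The plan is to reduce the corollary to the preceding theorem (the Euclidean algorithm in $F[X;Y,\lambda]_c$) by choosing a single tuple $c \in \mr^n_{>0}$ small enough that $f$ and $g$ both lie in the Banach algebra $F[X;Y,\lambda]_c$ and that $g$ remains $X_n$-distinguished of degree $s$ at that level. Once such a $c$ is located, the theorem delivers $q$ and $r$ inside the $c$-rings, and the inclusions $F[X;Y,\lambda]_c \subseteq F[X;Y,\lambda]$ and $F[X_1,\ldots,X_{n-1};Y,\lambda]_c \subseteq F[X_1,\ldots,X_{n-1};Y,\lambda]$ (Proposition 2.7) finish existence.

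To produce such a $c$, I would first apply Proposition 2.7 to get $c^{(1)}$ with $f \in F[X;Y,\lambda]_{c^{(1)}}$, and use the remark following the second definition of $X_n$-distinguished to get $c^{(0)}$ for which $g$ is $X_n$-distinguished of degree $s$ in $F[X;Y,\lambda]_{c^{(0)}}$. Set $c_n = \min(c_n^{(0)}, c_n^{(1)})$, then shrink $c_1, \ldots, c_{n-1}$ as in that remark to force $\|g_s\|_{\lambda,(c_1,\ldots,c_{n-1})} = 1$ and $\|g_k X_n^k\|_{\lambda, c} < p^{c_n s}$ for every $k \neq s$, and also shrink them enough that $f \in F[X;Y,\lambda]_c$ (further shrinking of $c_1, \ldots, c_{n-1}$ only decreases the quantities $|f_\mu|_\lambda p^{c\cdot \mu}$, so this is compatible). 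This works because the construction in the remark keeps $c_n$ fixed and only adjusts the first $n-1$ coordinates, so both desiderata can be achieved simultaneously.

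For uniqueness, suppose $f = \tilde q g + \tilde r$ is a second decomposition with $\tilde q \in F[X;Y,\lambda]$ and $\tilde r \in F[X_1,\ldots,X_{n-1};Y,\lambda][X_n]$ of $X_n$-degree less than $s$. Run the construction above with the additional constraint, again secured by shrinking the first $n-1$ coordinates of $c$, that $\tilde q$ and $\tilde r$ also lie in the corresponding $c$-rings. Then both $(q,r)$ and $(\tilde q, \tilde r)$ are valid decompositions of $f$ in $F[X;Y,\lambda]_c$, and uniqueness in the preceding theorem forces $q = \tilde q$ and $r = \tilde r$.

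The only real obstacle is bookkeeping: one must verify that the shrinking required to make $g$ distinguished at the level $c$ is compatible with keeping $f$ (and, for uniqueness, $\tilde q, \tilde r$) inside $F[X;Y,\lambda]_c$. This is handled by the observation that the argument in the remark only needs to shrink $c_1,\ldots,c_{n-1}$, and shrinking those coordinates weakens the summability condition defining $F[X;Y,\lambda]_c$, so membership of previously constructed elements is preserved. No new analytic ingredient beyond the theorem is needed.
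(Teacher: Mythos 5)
Your existence argument is exactly the paper's: choose $c$ with $f,g\in F[X;Y,\lambda]_c$, shrink to $c'\leq c$ so that $g$ becomes $X_n$-distinguished of degree $s$ in $F[X;Y,\lambda]_{c'}$, and perform the division there; the inclusions $F[X;Y,\lambda]_{c'}\subseteq F[X;Y,\lambda]$ give the corollary's existence claim. Where you genuinely diverge is uniqueness. The paper never places a competing decomposition $f=\tilde qg+\tilde r$ into a common $c$-ring; instead it proves the norm identity $|f|=\max\{|q|,|r|\}$ for the ordinary $Y$-adic norm: normalizing so $\max\{|q|,|r|\}=1$, if $|f|<1$ then $qg+r\equiv 0\bmod (Y)$, and since $\deg_{X_n}(r)<s=\deg_{X_n}(g\bmod (Y))$ this forces $q\equiv r\equiv 0\bmod (Y)$, a contradiction; applying the identity to $(q-\tilde q)g+(r-\tilde r)=0$ then kills both differences at once. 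Your route --- shrink $c$ further so that $\tilde q$ and $\tilde r$ also lie in the $c$-rings and invoke the uniqueness clause of the theorem --- is also valid, but note one bookkeeping point you glossed over: to get $\tilde q\in F[X;Y,\lambda]_c$ you may have to shrink $c_n$ as well, not only $c_1,\dots,c_{n-1}$, since $\tilde q$ is a priori only in some $F[X;Y,\lambda]_{c''}$ with $c''_n$ possibly smaller than your chosen $c_n$. This is harmless --- fix $c_n$ as the minimum of the $n$-th coordinates over all four elements first, then run the remark's shrinking of the first $n-1$ coordinates, which works for any fixed $c_n>0$ --- but it should be said. The trade-off: the paper's uniqueness is a short self-contained degree argument modulo $(Y)$ requiring no norm estimates, while yours is a purely formal reduction to the theorem at the cost of one more round of choosing $c$.
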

\begin{proof}
 Choose $c$ for which $f$ and $g$ are elements of $F[X; Y, \lambda]_c$. Then, choose $c_i' \leq c_i$ such that $g$ is $X_n$-distinguished of degree $s$ in $F[X; Y, \lambda]_{c'}$.  Carry out the division in this ring.

To show uniqueness we observe that $|f| = \max\{|q|, |r|\}$.  Indeed, without loss of generality we can assume that $\max\{|q|, |r|\} = 1$. Thus, $|f| \leq 1$.  Suppose that $|f| < 1$. Then, $0 \equiv qg + r \mod (Y)$.  Since $\deg_{X_n} (r) < s = \deg_{X_n} (g \mod (Y))$, we must have that $q = r \equiv 0 \mod (Y)$, contradicting $\max\{|q|, |r|\} = 1$.  Thus, if $q'g + r' = qg + r$, then $(q-q')g + (r-r') = 0$, thus, $|q-q'| = |r-r'| = 0$.
\end{proof}

\begin{thm}
 Let $f$ be $X_n$-distinguished of degree $s$. Then, there exists a unique monic polynomial $\omega$ in $F[X_1, \cdots, X_{n-1}; Y, \lambda][X_n]$ of degree $s$ in $X_n$ and a unique unit $e$ in $F[X ; Y, \lambda]$ such that $f = e\cdot \omega $.  Further, $\omega$ is distinguished of degree $s$.
\end{thm}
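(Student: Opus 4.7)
The plan is to extract $\omega$ and $e$ by running the Euclidean algorithm of the preceding corollary on the dividend $X_n^s$, and then to verify that the resulting quotient is a unit. The only genuinely substantive step is this unit-check; everything else is bookkeeping around the already-established division.

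First, apply the corollary with dividend $X_n^s$ and divisor $f$ (which is $X_n$-distinguished of degree $s$) to obtain the unique decomposition
\begin{eqnarray*}
 X_n^s &=& qf + r,
\end{eqnarray*}
with $q\in F[X;Y,\lambda]$ and $r\in F[X_1,\ldots,X_{n-1};Y,\lambda][X_n]$ of $X_n$-degree less than $s$. Set $\omega := X_n^s - r$, so $\omega = qf$ is monic of degree $s$ in $X_n$ with coefficients in $F[X_1,\ldots,X_{n-1};Y,\lambda]$. In particular $\omega$ is itself $X_n$-distinguished of degree $s$, since its $X_n^s$ coefficient is $1$ (a unit) and there are no higher $X_n$-terms.

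The crux is to check that $q$ is a unit of $F[X;Y,\lambda]$. By Lemma 2.8 it suffices to show $q \bmod (Y) \in F^\times$. Reduce the identity $X_n^s = qf + r$ modulo $(Y)$ inside $F[X_1,\ldots,X_{n-1}][X_n]$. Since $f_s$ is a unit of $F[X_1,\ldots,X_{n-1};Y,\lambda]$ and the units of $F[X_1,\ldots,X_{n-1}]$ are precisely $F^\times$, its reduction $\overline{f_s}=c$ lies in $F^\times$; hence $\overline{f}$ is a polynomial of $X_n$-degree exactly $s$ with leading coefficient $c$. The reduced identity $X_n^s=\overline{q}\,\overline{f}+\overline{r}$ with $\deg_{X_n}\overline{r}<s$ is therefore the unique polynomial Euclidean division of $X_n^s$ by $\overline{f}$, and comparing leading $X_n$-coefficients forces $\overline{q}=c^{-1}\in F^\times$. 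Thus $q$ is invertible, and setting $e:=q^{-1}$ yields $f=e\omega$.

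For uniqueness, reverse the procedure: if $f = e'\omega'$ is another such factorization with $e'$ a unit and $\omega'$ monic of degree $s$ in $X_n$, then
\begin{eqnarray*}
 X_n^s &=& (e')^{-1}f + (X_n^s-\omega'),
\end{eqnarray*}
and since $\deg_{X_n}(X_n^s-\omega')<s$, the uniqueness clause of the corollary forces $(e')^{-1}=q$ and $X_n^s-\omega'=r$, whence $e'=e$ and $\omega'=\omega$.
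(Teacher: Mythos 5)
Your proposal is correct and follows essentially the same route as the paper: divide $X_n^s$ by $f$ via the Euclidean algorithm, set $\omega = X_n^s - r$, verify that $q$ is a unit by reducing modulo $(Y)$ and comparing $X_n$-degrees/leading coefficients, and deduce uniqueness from the uniqueness of the division. Your write-up just spells out the unit-check and the uniqueness step in slightly more detail than the paper does.
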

\begin{proof}
 By the previous theorem there exists an element $q$ in $F[X; Y, \lambda]$ and a polynomial $r$ in $F[X_1, \cdots, X_{n-1}; Y, \lambda][X_n]$ such that $\deg_{X_n}( r) < s$ and $X_n^s = qf + r$.  We let $\omega = X_n^s - r$.  $\omega = qf$ is clearly $X_n$-distinguished of degree $s$.  Since the $X_n$-degree of $\omega \mod (Y)$ is the same the $X_n$-degree of $f \mod (Y)$, we see that $q$ is a unit.  Set $e = q^{-1}$, yielding $f = e\cdot \omega$.  Uniqueness is immediate from the uniqueness of the division algorithm.
\end{proof}
\begin{defi}
 A Weierstrass polynomial is a monic $X_n$-distinguished polynomial in $F[X_1, \cdots, X_{n-1}; Y, \lambda][X_n]$.
\end{defi}
\begin{lem}
 Let $\sigma$ be the map such that $\sigma(X_i) = (X_i + X_j^d)$ with $d\geq 1$, and $\sigma(X_j) = X_j$ for all $j \neq i$.  Then $\sigma$ is a well-defined automorphism of $F[X; Y, \lambda]$.
\end{lem}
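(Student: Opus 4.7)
The plan is to define $\sigma$ by its action on the generators (fixing $F$, $Y$, and $X_l$ for $l \neq i$, and sending $X_i$ to $X_i + X_j^d$) and then to verify three things: (i) $\sigma$ maps $F[X;Y,\lambda]$ into itself, (ii) $\sigma$ is a ring homomorphism, and (iii) $\sigma$ is bijective with inverse obtained by replacing $+X_j^d$ with $-X_j^d$.

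For (i), given $f = \sum_\mu f_\mu(Y) X^\mu \in F[X;Y,\lambda]$, I would expand $(X_i + X_j^d)^{\mu_i}$ by the binomial theorem and collect the coefficient of each $X^\nu$. Letting $\mu(k)$ agree with $\nu$ in every coordinate except $\mu(k)_i = \nu_i + k$ and $\mu(k)_j = \nu_j - dk$, a short bookkeeping yields
\begin{equation*}
\sigma(f)_\nu(Y) \;=\; \sum_{k=0}^{\lfloor \nu_j/d \rfloor} \binom{\nu_i + k}{k}\, f_{\mu(k)}(Y).
\end{equation*}
Only finitely many $k$ contribute to each $\nu$, so $\sigma(f)$ is well-defined formally in $F[X][[Y]]$. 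The main obstacle, as I see it, is to check that $\sigma(f)$ still satisfies the growth condition defining $F[X;Y,\lambda]$.

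The key arithmetic is $|\mu(k)| = |\nu| - (d-1)k$ combined with $k \leq \nu_j/d \leq |\nu|/d$, which gives the lower bound $|\mu(k)| \geq |\nu|/d$. Since every nonzero element of $F$ has $|\cdot|_\lambda$-norm exactly $1$ (because $\lambda(0)=0$), the ultrametric inequality yields
\begin{equation*}
|\sigma(f)_\nu|_\lambda\, p^{(C_f/d)|\nu|} \;\leq\; \max_{0 \leq k \leq \nu_j/d}\, |f_{\mu(k)}|_\lambda\, p^{C_f |\mu(k)|}.
\end{equation*}
As $|\nu| \to \infty$ we have $|\mu(k)| \to \infty$ uniformly, so the right-hand side tends to $0$; thus $\sigma(f) \in F[X;Y,\lambda]$ with growth constant $C_f/d$.

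Property (ii) follows because $\sigma$ is a substitution of generators: on polynomial partial sums it commutes with addition and Cauchy-product multiplication by classical polynomial algebra, and the estimate just established shows $\sigma$ is continuous (with a rescaling of $c$) as a map between the Banach algebras $F[X;Y,\lambda]_c$ of Proposition 2.9, so the ring-homomorphism identities extend by density and continuity. Property (iii) follows because the analogous map $\sigma'$ with $X_i \mapsto X_i - X_j^d$ is well-defined on $F[X;Y,\lambda]$ by the identical estimate --- the sign of $X_j^d$ is invisible to $|\cdot|_\lambda$ --- and $\sigma \circ \sigma'$ and $\sigma' \circ \sigma$ each fix the generators, hence act as the identity on all of $F[X;Y,\lambda]$ by (ii).
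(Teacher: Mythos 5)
Your proof is correct and follows essentially the same route as the paper's: binomial expansion of $(X_i+X_j^d)^{\mu_i}$, the observation that the substitution shrinks total degree by at most a factor of $d$ so that rescaling the growth constant (you use $C_f/d$, the paper uses $c_n/(2(d+1))$ on the $X_n$-coordinate) preserves membership in $F[X;Y,\lambda]$, and inversion via $X_i\mapsto X_i-X_j^d$. Your version is somewhat more explicit than the paper's in tracking the coefficient of each monomial $X^\nu$ and in justifying the homomorphism property by density and continuity, but the underlying argument is the same.
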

\begin{proof}
 Without loss of generality, we can assume that $i = n$ and $j = 1$. Let $f(X,Y) = \sum_{\mu} f_\mu(Y)X^\mu = \sum_{\mu} f_\mu(Y)X_1^{\mu_1}\cdots X_n^{\mu_n}$.  Observe that,
\begin{eqnarray*}
 \sigma(f) &=& \sum_\mu f_\mu(Y)X_1^{\mu_1}\cdots X_{n-1}^{\mu_{n-1}}(X_n + X_1^d)^{\mu_n} \\
&=& \sum_\mu f_\mu(Y)X_1^{\mu_1}\cdots X_{n-1}^{\mu_{n-1}}\sum_{j=0}^{\mu_n} \binom{\mu_{n}}{j}X_1^{dj}X_n^{\mu_n-j}.
\end{eqnarray*}
The quantity $dj + \mu_n - j= (d-1)j +\mu_n$ is maximal when $j = \mu_n$. Therefore, the above converges if
\begin{eqnarray*}
 \sum_\mu f_\mu(Y)X_1^{\mu_1 + d\mu_n}\cdots X_{n-1}^{\mu_{n-1}}X_n^{\mu_n}
\end{eqnarray*}
converges.  Choose $c$ so that $|f_\mu(Y)|p^{c\cdot \mu}$ converges to $0$ as $|\mu|$ approaches $\infty$.  Let $c_n' = \frac{c_n}{2(d+1)}$. Then, $c_1\mu_1 + \cdots + c_n\mu_n > c_1\mu_1 + \cdots + c_n'\mu_{n}(d+1)$, so $|f_\mu(Y)|p^{c_1\mu_1 + \cdots + c_n'\mu_{n}(d+1)}$ converges to $0$ as $|\mu|$ approaches $\infty$. Therefore, this map is well defined with inverse, $\sigma^{-1}(X_n) = X_n - X_1^d$ and $\sigma^{-1}(X_j) = X_j$, if $j \neq n$.
\end{proof}
\begin{thm}
 Suppose $f(X,Y) = \sum_{\mu}f_\mu(Y) X^\mu$ is an element of $F[X; Y,\lambda]$.  If $|f_\mu(Y)| = 1$, for some $\mu$, where $\mu_n > 0$, then there exists an automorphism $\sigma$ of $F[X; Y, \lambda]$ such that $\sigma(f)$ is $X_n$-distinguished.
\end{thm}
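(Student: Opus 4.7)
The plan is to reduce modulo $(Y)$, exhibit an explicit automorphism built by composing the elementary substitutions from Lemma 2.15, and check the two defining bullets of $X_n$-distinguished. The key enabling observation is that $\bar f := f \bmod (Y)$ is actually a polynomial in $F[X_1,\ldots,X_n]$ with finite support. Writing $f = \sum_\mu f_\mu(Y) X^\mu$, membership in $F[X;Y,\lambda]$ forces $|f_\mu|_\lambda p^{C_f|\mu|} \to 0$, whence $\ord_Y(f_\mu) \to \infty$ as $|\mu| \to \infty$; so only finitely many $\mu$ satisfy $\ord_Y(f_\mu) = 0$. Call this finite set $S$, so $\bar f = \sum_{\mu \in S} \bar f_\mu X^\mu$, and by hypothesis some $\mu \in S$ has $\mu_n > 0$.

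Next I would build $\sigma$ by composing the substitutions from Lemma 2.15. Choose an integer $D$ strictly greater than every component $\mu_i$ that occurs for $\mu \in S$, and set $d_k := D^{n-k}$ for $k = 1,\ldots,n-1$. Let $\sigma_k$ be the automorphism of Lemma 2.15 given by $\sigma_k(X_k) = X_k + X_n^{d_k}$, fixing all other variables and $Y$; since the $\sigma_k$ modify disjoint variables and all fix $X_n$, they commute pairwise, and the composition $\sigma := \sigma_{n-1} \circ \cdots \circ \sigma_1$ is an automorphism of $F[X;Y,\lambda]$ with $\sigma(X_k) = X_k + X_n^{d_k}$ for $k < n$ and $\sigma(X_n) = X_n$. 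Now $\sigma(f) \bmod (Y) = \sigma(\bar f)$; expanding each binomial, the monomial $\bar f_\mu X^\mu$ contributes a top-$X_n$-degree term $\bar f_\mu X_n^{s(\mu)}$ (with no $X_1,\ldots,X_{n-1}$ in this particular term), where $s(\mu) := D^{n-1}\mu_1 + \cdots + D\mu_{n-1} + \mu_n$. Because every $\mu_i$ appearing in $S$ is less than $D$, the map $s$ is injective on $S$ (this is just uniqueness of base-$D$ digits), so there is a unique $\mu^\ast \in S$ achieving $s^\ast := \max_{\mu \in S} s(\mu)$; and the hypothesis that some $\mu \in S$ has $\mu_n > 0$ forces $s^\ast \geq 1$.

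Writing $\sigma(f) = \sum_k g_k(X_1,\ldots,X_{n-1},Y) X_n^k$, no other summand contributes to the $X_n^{s^\ast}$-coefficient modulo $(Y)$, so $g_{s^\ast} \equiv \bar f_{\mu^\ast} \pmod{(Y)}$ with $\bar f_{\mu^\ast} \in F^\times$ -- hence a unit in $F[X_1,\ldots,X_{n-1};Y,\lambda]$ by Lemma 2.8 -- while $g_k \equiv 0 \pmod{(Y)}$ for all $k > s^\ast$. These are exactly the two conditions for $\sigma(f)$ to be $X_n$-distinguished of degree $s^\ast$. The only substantive step is the finiteness of $S$; once that is in hand, the base-$D$ digit trick for the $d_k$ is essentially forced by the combinatorics.
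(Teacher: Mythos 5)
Your proposal is correct and follows essentially the same route as the paper: compose the elementary automorphisms $X_i \mapsto X_i + X_n^{d_i}$ of Lemma 2.15 with exponents chosen so that the weighted degree $\mu \mapsto \sum_i d_i\mu_i$ singles out a unique leading monomial of $f \bmod (Y)$, then read off the two conditions for being $X_n$-distinguished. The only (cosmetic) difference is your choice of weights $d_k = D^{n-k}$ with the base-$D$ injectivity argument, versus the paper's recursive $d_{n-j} = 1 + t\sum_{k=0}^{j-1} d_{n-k}$ together with the lexicographically maximal exponent.
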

\begin{proof}
 Let $f(X,Y) = \sum_{\mu} f_\mu(Y) X^\mu = \sum_{\mu} f_\mu(Y)X_1^{\mu_1}\cdots X_n^{\mu_n}$.  Let $\nu = (\nu_1, \cdots, \nu_n)$ be the maximal $n$-tuple, with respect to lexicographical ordering, such that $f_\nu(Y)$ is not an element of $(Y)$.  Let $t \geq \max_{1\leq i\leq n} \mu_i$ for all indices $\mu$ such that $f_\mu(Y)$ is not an element of $(Y)$, e.g., let $t$ be the total $X$-degree of $f(X,Y) \mod (Y)$. Now, define an automorphism $\sigma(X_i) = X_i + X_n^{d_i}$ for $i = 1, \cdots, n-1$, and $\sigma(X_n) = X_n$, where $d_n = 1$, and $d_{n-j} = 1 + t\sum_{k=0}^{j-1} d_{n-k}$, for $j = 1, \cdots, n-1$.  We see that this map is just a finite composition of automorphisms of the same type as given above. Hence, it is an automorphism.

We will prove that $\sigma(f)$ is $X_n$-distinguished of degree $s = \sum_{i=1}^nd_i\nu_i$.  First, for all $\mu$ such that $f_\mu(Y)$ is a unit, and $\mu \neq \nu$, we have $\sum_{i=1}^n d_i\mu_i < s$:  There exists an index $q$ such that $1 \leq q \leq n$, such that $\mu_1 = \nu_1, \cdots, \mu_{q-1} = \nu_{q-1}$ and $\mu_q < \nu_q$.  Therefore $\mu_q \leq \nu_q - 1$ and
\begin{eqnarray*}
 \sum_{i=1}^nd_i\mu_i \leq \sum_{i=1}^{q-1}d_i\nu_i + d_q(\nu_q-1) + t\sum_{i=q+1}^nd_i =
 \sum_{i=1}^qd_i\nu_i - 1 < \sum_{i=1}^nd_i\nu_i = s.
\end{eqnarray*}
Now,
\begin{eqnarray*}
 \sigma(f) &=& \sum_\mu f_\mu(Y)(X_1 + X_n^{d_1})^{\mu_1}\cdots(X_{n-1} + X_n^{d_{n-1}})^{\mu_{n-1}}X_n^{\mu_n} \\
&\equiv& \sum_{\substack{\mu\\  f_{\mu}(Y) \notin (Y)}}f_{\mu}(Y)\sum_{\substack{\lambda_1, \cdots, \lambda_{n-1} \\ 0\leq \lambda_i \leq \mu_i}}\binom{\mu_1}{\lambda_1}\cdots\binom{\mu_{n-1}}{\lambda_{n-1}}X_1^{\mu_1 - \lambda_1}\cdots X_{n-1}^{\mu_{n-1} - \lambda_{n-1}}X_n^{d_1\lambda_1 + \cdots + d_{n-1}\lambda_{n-1} + \mu_n} \\
&\equiv& \sum g_iX^i_n \mod(Y)
\end{eqnarray*}
where the $g_i$ are elements of $F[X_1, \cdots, X_{n-1}]$.  Therefore, $\sigma(f) \mod (Y)$ is a polynomial in $X_n$ of degree less than or equal to $s$, and $X_n^{d_1\lambda_1 + \cdots + d_{n-1}\lambda_{n-1} + \mu_n} = X_n^s$ if, and only if, $\mu_n = \nu_n$ and $\lambda_i = \mu_i = \nu_i$ for $i = 1, \cdots, n-1$.  Thus, we have $g_s = f_\nu(Y) \mod(Y)$, but $f_\nu(Y)$ is not an element of $(Y)$, and so $\sigma(f)$ is a unitary polynomial modulo $(Y)$.  Therefore, $\sigma(f)$ is $X_n$-distinguished of degree $s$.
\end{proof}
\begin{thm}
 Let $\omega$ be a Weierstrass polynomial of degree $s$ in $X_n$.  Then for all $d \geq 0$
\begin{enumerate}
 \item $Y^dF[X; Y, \lambda]/Y^d\omega F[X; Y, \lambda]$ is a finite free $F[X_1, \cdots, X_{n-1}; Y, \lambda]$-module, and
\item $Y^dF[X_1, \cdots, X_{n-1}; Y, \lambda][X_n]/Y^d\omega F[X_1, \cdots, X_{n-1}; Y, \lambda][X_n]$ is isomorphic to \\$Y^dF[X; Y, \lambda]/Y^d\omega F[X; Y, \lambda]$.
\end{enumerate}
\end{thm}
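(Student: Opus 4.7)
The plan is to reduce both statements to the division-with-remainder decomposition supplied by the Euclidean algorithm (Corollary~2.10 in the excerpt), together with the elementary fact that ordinary polynomial division works in $F[X_1,\ldots,X_{n-1};Y,\lambda][X_n]$ because $\omega$ is monic in $X_n$. Set $A = F[X_1,\ldots,X_{n-1};Y,\lambda]$, $R = F[X;Y,\lambda]$, and $P = A[X_n]$. First I would dispose of the $Y^d$: since $R$ embeds in $F[X][[Y]]$ it is a domain, so multiplication by $Y^d$ is injective on $R$ and on $P$, and therefore induces $A$-module isomorphisms $R/\omega R \xrightarrow{\sim} Y^d R / Y^d\omega R$ and $P/\omega P \xrightarrow{\sim} Y^d P / Y^d\omega P$. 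This reduces both claims to their $d=0$ versions.

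For part 1, Corollary~2.10 asserts that every $f \in R$ has a unique decomposition $f = q\omega + r$ with $q \in R$ and $r = \sum_{i=0}^{s-1} r_i X_n^i$ for uniquely determined $r_i \in A$. Reading this as an internal direct sum of $A$-modules gives $R = \omega R \oplus \bigoplus_{i=0}^{s-1} A\,X_n^i$, and hence exhibits $R/\omega R$ as a free $A$-module with basis $\{1, X_n, \ldots, X_n^{s-1}\}$.

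For part 2, I would run the analogous argument over $P$. Because $\omega$ is monic of degree $s$ in $X_n$, standard polynomial division over $A$ yields the parallel decomposition $P = \omega P \oplus \bigoplus_{i=0}^{s-1} A\,X_n^i$, so $P/\omega P$ is also free over $A$ with basis $\{1, X_n, \ldots, X_n^{s-1}\}$. The inclusion $P \hookrightarrow R$ is $A$-linear, carries each $X_n^i$ to itself, and sends $\omega P$ into $\omega R$, so it descends to a map $P/\omega P \to R/\omega R$ that matches basis with basis; this is therefore an $A$-module isomorphism, and transporting it back by multiplication by $Y^d$ yields the statement of part 2.

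The one point requiring care is the injectivity of the descended map $P/\omega P \to R/\omega R$. If $p \in P$ lies in $\omega R$, then $p = \omega h$ for some $h \in R$; because $p$ and $\omega$ are both polynomials in $X_n$, the final sentence of Theorem~2.9 (preservation of polynomials under the division algorithm) forces the quotient $h$ to lie in $P$, so in fact $p \in \omega P$. I expect this to be the only step where the ring-theoretic delicacy of $\lambda$ matters, and even there the work has already been absorbed into Theorem~2.9; once that sentence is invoked, the remainder of the proof is routine bookkeeping.
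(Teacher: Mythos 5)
Your proposal is correct and takes essentially the same route as the paper: both derive freeness of $R/\omega R$ over $A = F[X_1,\ldots,X_{n-1};Y,\lambda]$ from the uniqueness of the remainder in the Euclidean algorithm, and both obtain the isomorphism in part~2 from the natural map $P = A[X_n] \to R/\omega R$. The one place you are more careful than the paper is in identifying the kernel of that map: the paper dismisses it as holding ``trivially,'' whereas you correctly observe that one must invoke the polynomial-preservation clause at the end of Theorem~2.9 (equivalently, compare ordinary monic division in $A[X_n]$ against the unique Euclidean decomposition in $R$) to conclude that $p \in P \cap \omega R$ forces $p \in \omega P$.
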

\begin{proof}
 Suppose that $g$ is an element of $Y^dF[X; Y, \lambda]$, then $g = Y^dh$ for some element $h$ in $F[X; Y, \lambda]$. Since $\omega$ is $X_n$-distinguished, there exists a unique element $q$ in $F[X; Y, \lambda]$, and a unique polynomial $r$ in $F[X_1, \cdots, X_{n-1}; Y, \lambda][X_n]$ with $\deg_{X_n} (r) < s$, such that $h = q\omega + r$, so $g = qY^d\omega + Y^dr$.  Therefore, $g \equiv Y^dr \mod Y^d\omega F[X; Y, \lambda]$, so the set $\{Y^d, Y^dX_n, \cdots, Y^dX_n^{s-1}\}$ forms a generating set of $Y^dF[X; Y, \lambda]/Y^d\omega F[X; Y, \lambda]$ over the ring $F[X_1,\cdots, X_{n-1}; Y, \lambda]$. The natural map 
$$Y^dF[X_1, \cdots, X_{n-1}; Y, \lambda][X_n] \rightarrow Y^dF[X; Y, \lambda]/Y^d\omega F[X; Y, \lambda]$$
 is thus surjective.  The kernel of this map is $Y^d \omega F[X_1, \cdots, X_{n-1}; Y, \lambda][X_n]$, trivially.
\end{proof}
\begin{thm}
 $F[X; Y, \lambda] = F[X_1, \cdots, X_n; Y, \lambda]$ is factorial, for all $n \geq 1$.
\end{thm}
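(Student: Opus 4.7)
The plan is to prove by induction on $n$ that every nonzero non-unit of $F[X;Y,\lambda]$ factors as a unit times a product of prime elements, which establishes unique factorization. For the base case $n = 1$, the relevant base ring for the Weierstrass step is $F[[Y]]$, a discrete valuation ring and hence a UFD. For the inductive step, set $R := F[X_1,\ldots,X_{n-1};Y,\lambda]$ and assume inductively that $R$ is factorial; by Gauss's lemma the polynomial ring $R[X_n]$ is then also factorial.

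Given a nonzero $f \in F[X;Y,\lambda]$, I would first strip off powers of $Y$. Since $F[X;Y,\lambda]/(Y)\cong F[X_1,\ldots,X_n]$ is a domain and $Y$ is a non-unit by Lemma 2.8, $Y$ is a prime element, so write $f = Y^a g$ with $Y\nmid g$; it suffices to factor $g$. If $g$ is a unit we are done. Otherwise $g \mod (Y)$ is a non-constant polynomial in $F[X_1,\ldots,X_n]$, so it involves some $X_i$; after a permutation automorphism swapping $X_i$ and $X_n$ we may assume $i = n$, which puts us in the hypothesis of the Automorphism Theorem. That theorem produces $\sigma$ with $\sigma(g)$ $X_n$-distinguished, and Weierstrass Factorization then writes
\[
\sigma(g) \;=\; e\cdot\omega,
\]
with $e$ a unit in $F[X;Y,\lambda]$ and $\omega \in R[X_n]$ a Weierstrass polynomial of some degree $s\geq 1$. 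Using the UFD property of $R[X_n]$, factor $\omega = \omega_1\cdots\omega_k$ into primes; after normalizing each $\omega_i$ to be monic, reduction mod $(Y)$ yields $X_n^s = \prod_i(\omega_i \mod (Y))$ in $F[X_1,\ldots,X_{n-1}][X_n]$, so $\omega_i \mod (Y) = X_n^{s_i}$ and each $\omega_i$ is itself a Weierstrass polynomial.

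To finish, I would transfer primality back to the full ring. By the previous theorem with $d = 0$ applied to the Weierstrass polynomial $\omega_i$, the natural inclusion and quotient induce a ring isomorphism
\[
R[X_n]/(\omega_i) \;\cong\; F[X;Y,\lambda]/(\omega_i),
\]
so primality of $\omega_i$ in $R[X_n]$ carries over to primality in $F[X;Y,\lambda]$; here I also use that by Lemma 2.8 an element of $R[X_n]$ is a unit in $R[X_n]$ if and only if it is a unit in $F[X;Y,\lambda]$, so $\omega_i$ remains a non-unit in the larger ring. Since $\sigma$ fixes $Y$, undoing $\sigma$ produces
\[
f \;=\; Y^a\,\sigma^{-1}(e)\prod_{i=1}^k \sigma^{-1}(\omega_i),
\]
which exhibits $f$ as a unit times a product of primes.

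The main obstacle is the primality-transfer step. One has to check that the quotient isomorphism from the previous theorem is genuinely a ring isomorphism (induced by the inclusion $R[X_n]\hookrightarrow F[X;Y,\lambda]$ composed with the quotient map), so that primality really does pass through, and that the two notions of "unit" agree on elements of $R[X_n]$. Both points are short consequences of Lemma 2.8 and the explicit description of the isomorphism, but they are the technical heart of deducing factoriality from the Weierstrass-Euclidean machinery.
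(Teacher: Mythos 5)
Your proof follows essentially the same route as the paper's: strip off a power of $Y$, reduce to an $X_n$-distinguished element, apply Weierstrass preparation, factor the Weierstrass polynomial in $F[X_1,\cdots,X_{n-1};Y,\lambda][X_n]$ (factorial by induction together with Gauss's lemma), and transfer the factorization back through the isomorphism of the preceding theorem with $d=0$. You have in effect written out the induction that the paper compresses into ``Continue by induction,'' and organizing the transfer step around \emph{primality} of each $\omega_i$ via the ring isomorphism $R[X_n]/(\omega_i)\cong F[X;Y,\lambda]/(\omega_i)$ is cleaner than the paper's direct irreducibility argument, though it is the same underlying mechanism. The one place you assert something the paper does not is the claim that $F[X;Y,\lambda]/(Y)\cong F[X_1,\cdots,X_n]$, which you use both to make $Y$ prime and to guarantee that, after writing $f=Y^a g$ with $Y\nmid g$, the power-series reduction of $g$ modulo $Y$ is a nonzero polynomial. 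That claim needs justification: the kernel of the reduction map $f\mapsto f \bmod YF[X][[Y]]$ contains $YF[X;Y,\lambda]$ but is not obviously equal to it, because dividing $f=\sum_k f_k$ by $Y$ shifts the degree constraint from $\deg_X f_k\leq C\lambda(k)$ to needing $\deg_X f_{k+1}\leq C'\lambda(k)$, and the ratio $\lambda(k+1)/\lambda(k)$ need not be bounded for a general growth function. The paper's own proof rests on the same point when it writes $f=e\cdot Y^d\omega$ without comment, so this is a shared subtlety rather than a divergence from the paper; but since you made the isomorphism explicit, you should either prove it or restructure the $Y$-stripping step so as not to rely on it.
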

\begin{proof}
 First assume that $n=1$.  Suppose that $f$ is an element of $F[X; Y, \lambda]$.  Write $f = e\cdot Y^d  \omega $, where $\omega$ is a  unitary polynomial in $X$ of degree $s$ in $F[[Y]][X]$, and $e$ is a unit in $F[X; Y, \lambda]$.  We can factor $\omega = uq_1\cdots q_m$ into irreducible factors and a unit in $F[[Y]][X]$ because this ring is factorial. We want to show that these factors are still irreducible in $F[X; Y, \lambda]$.  Suppose that $q_i$ is not irreducible modulo $\omega F[[Y]][X]$, then $q_i \equiv ab \mod \omega$, so there exists $g \neq 0$ such that $q_i = ab + g\omega$.  However, by the uniqueness of the division algorithm $g = 0$, thus, $a$ or $b$ is a unit modulo $\omega$. Therefore, $q_i$ is irreducible in $F[[Y]][X]/\omega F[[Y]][X] \simeq F[X; Y, \lambda]/\omega F[X; Y, \lambda]$.

If $q_i$ is not irreducible in $F[X; Y, \lambda]$, then there exists elements $a$ and $b$ in $F[X; Y, \lambda]$, such that $q_i = ab$. Without loss of generality, $b$ must be a unit modulo $\omega$, so $b = c_0 + g\omega$.  Write $q_i = a(c_0 + g\omega) = ac_0 + ag\omega$.  However, by the uniqueness of the division algorithm, the same representation of the division algorithm which holds in $F[[Y]][X]$, holds in $F[X; Y, \lambda]$, and since $\deg_X (ac_0) < s$, we must have $ag = 0$. This is a contradiction.  Therefore, the $q_i$ are irreducible in both rings.  Write $f = eu\cdot Y^dq_1\cdots q_m$ uniquely as a product of irreducible factors and a unit. Continue by induction.
\end{proof}
\begin{thm}
 $F[X_1, \cdots, X_n; Y, \lambda]$ is noetherian.
\end{thm}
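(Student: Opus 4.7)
I would argue by induction on $n$. The base case $n = 0$ is $F[;Y,\lambda] = F[[Y]]$, a discrete valuation ring and hence noetherian. For the inductive step, set $A := F[X_1, \ldots, X_{n-1}; Y, \lambda]$ (noetherian by hypothesis) and $R := F[X_1, \ldots, X_n; Y, \lambda]$.

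The heart of the argument is the claim: for every Weierstrass polynomial $\omega \in A[X_n]$ and every integer $d \geq 0$, the ring $R/Y^d\omega R$ is noetherian. I would prove this via the short exact sequence of $R$-modules
$$0 \longrightarrow Y^dR/Y^d\omega R \longrightarrow R/Y^d\omega R \longrightarrow R/Y^dR \longrightarrow 0.$$
By the preceding theorem on $Y^dR/Y^d\omega R$, the left-hand term is a finite free $A$-module, hence (since $A$ is noetherian by induction) a noetherian $A$-module, and therefore a noetherian $R$-module. For the right-hand term, the convergence condition $|f_\mu|_\lambda p^{c|\mu|} \to 0$ forces all but finitely many coefficients of any $f \in R$ to have $Y$-order at least $d$, so reduction modulo $Y^d$ collapses $f$ to an honest polynomial; this identifies $R/Y^dR$ with $(F[Y]/Y^d)[X_1, \ldots, X_n]$, which is noetherian by Hilbert's basis theorem. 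Both ends of the sequence are thus noetherian $R$-modules, so the middle term is a noetherian $R$-module, and a ring that is noetherian as a module over itself is a noetherian ring.

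To finish, let $I \subseteq R$ be a nonzero ideal and pick a nonzero $f \in I$. Let $d \geq 0$ be maximal such that $f \in Y^dR$, and write $f = Y^d g$ with $g \in R$; the maximality of $d$ ensures $g \bmod (Y) \neq 0$. By the invertibility lemma, $g$ is a unit if and only if $g \bmod (Y)$ is a nonzero constant, in which case $(f) = (Y^d)$ and the claim applies with the trivial Weierstrass polynomial $\omega = 1$. Otherwise $g \bmod (Y)$ is a nonconstant polynomial in $F[X_1, \ldots, X_n]$, and after a permutation of the $X_i$ (a manifest automorphism of $R$) we may assume some coefficient $g_\mu$ with $\mu_n > 0$ lies outside $(Y)$. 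The Automorphism Theorem then supplies an automorphism $\sigma$ of $R$ making $\sigma(g)$ $X_n$-distinguished, and Weierstrass Factorization writes $\sigma(g) = e\omega$ with $e \in R^\times$ and $\omega \in A[X_n]$ a Weierstrass polynomial. Since every automorphism used fixes $Y$, $\sigma(f) = Y^d e \omega$, so $(\sigma(f)) = (Y^d\omega)$ as ideals of $R$. By the claim, $R/(Y^d\omega)$ is noetherian, so the image of $\sigma(I)$ there is finitely generated; lifting those generators and adjoining $Y^d\omega$ gives a finite generating set of $\sigma(I)$, and applying $\sigma^{-1}$ transports it to a finite generating set of $I$.

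The main obstacle I anticipate is handling the case where $g \bmod (Y)$ happens not to involve $X_n$: the Automorphism Theorem requires some $\mu$ with $\mu_n > 0$ and $g_\mu \notin (Y)$, so the preliminary variable-permutation is essential and must be justified explicitly as an automorphism preserving the $X$-degree filtration. A secondary subtle point is the identification $R/Y^dR \cong (F[Y]/Y^d)[X_1, \ldots, X_n]$, which is the one place in the argument where the growth condition defining $R$ plays a genuinely essential role.
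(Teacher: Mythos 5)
Your overall strategy is the paper's: induction on $n$, Weierstrass division by $Y^d\omega$, and the finite‑free‑module theorem. But there is a genuine gap at both points where you pass between ``divisible by $Y^d$ in $R$'' and ``all coefficients divisible by $Y^d$''; these are not equivalent in $R=F[X;Y,\lambda]$. Concretely, for $n=1$ take $\lambda(x)=2^{2^x}-2$ (strictly increasing, $\lambda(0)=0$, convex, hence superadditive) and $f=\sum_{k\geq 1}Y^kX^{j_k}$ with $j_k=\lceil\lambda(k)\rceil$. Then $\deg_X f_k=j_k\leq 2\lambda(k)$, so $f\in R$, and every coefficient of $f$ lies in $(Y)$; yet $f/Y=\sum_{k\geq 1}Y^{k-1}X^{j_k}$ is not in $R$, because $j_k/\lambda(k-1)\to\infty$. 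Hence the kernel of the reduction map $R\to (F[Y]/Y^d)[X_1,\ldots,X_n]$ strictly contains $Y^dR$, and your identification $R/Y^dR\cong (F[Y]/Y^d)[X_1,\ldots,X_n]$ fails: what you actually have is a surjection whose extra kernel $\{h\in R: Y^d\mid h_\mu\ \forall\mu\}/Y^dR$ is a nonzero ideal whose finite generation is essentially as hard as the theorem itself. Growth functions with $\lambda(k+1)/\lambda(k)$ unbounded are exactly the new cases this paper treats (bounded ratio is Wan's earlier hypothesis $\lambda(x+y)\leq\lambda(x)\mu(y)$), so this is not a corner case. You flagged this step as the one where the growth condition is ``genuinely essential,'' but the condition works against the identification, not for it.

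The same conflation undermines the step ``the maximality of $d$ ensures $g\bmod (Y)\neq 0$'': for the $f$ above, $d=0$ is maximal (since $f/Y\notin R$), yet $f\equiv 0\bmod (Y)$, so your $g$ is neither a unit nor has any $g_\mu\notin (Y)$, and the Automorphism Theorem cannot be invoked. The paper sidesteps the quotient ring $R/Y^dR$ entirely: it takes $d$ maximal with $I\subseteq Y^dR$ (the whole ideal, not a single element) and uses only that the image of $I$ in the \emph{module} $Y^dR/Y^d\omega R$, which is finite free over $F[X_1,\ldots,X_{n-1};Y,\lambda]$ by the preceding theorem, is finitely generated; generators are then lifted and $Y^d\omega$ is adjoined. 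If you re-route your argument through that module, your short exact sequence and the problematic term $R/Y^dR$ become unnecessary. You would still owe an element $f\in I$ with $f\in Y^dR$ and $f/Y^d\not\equiv 0\bmod (Y)$ --- a point the paper itself treats rather quickly --- but that is a far smaller repair than proving $R/Y^dR$ noetherian. The remainder of your argument (the unit case, the permutation of variables before applying the Automorphism Theorem, transporting generators by $\sigma^{-1}$) is correct and in fact more carefully spelled out than the paper's ``continue by induction.''
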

\begin{proof} Assume first that $n =1$.
Let $I \subseteq F[X; Y, \lambda]$ be an ideal.  Suppose that $d$ is the largest positive integer such that $I \subseteq Y^d F[X; Y,\lambda]$. Then every $f$ in $I$ is divisible by $Y^d$. Choose an element $f$ in $I$ such that $\ord_Y f = d$. We can then write $f = e\cdot Y^d \omega $ for some unit $e$, and Weierstrass polynomial $\omega$. Consider the image of $I$ in $Y^dF[X; Y, \lambda]/Y^d\omega F[X; Y, \lambda] \simeq Y^dF[[Y]][X]/Y^d\omega F[[Y]][X]$; this is Noetherian. Therefore, we can pull back the finite list of generators for the image of $I$ and add $Y^d \omega$ to get a finite generating system for $I$.  Continue by induction.

\end{proof}
\section{Further Questions}

This paper resolves the open problem left in Wan
\cite{Wan1}, stated at the beginning of the paper, only when $F$ is a field and when $F[X; Y, \lambda]$ has only one $Y$ variable. It would be interesting to settle the general case (either positively or negatively) when $Y$ has more than one variable and $R$ is a general noetherian ring.

Another open question is whether $F[X; Y, \lambda]$ is factorial if there is more than one $Y$ variable. The answer to this
question cannot be obtained from the same methods used in this paper because elements exist that cannot be transformed into an $X_n$ distinguished element through an automorphism. For example:
\begin{eqnarray*}
 f(X,Y) = Y_1 + XY_2 + X^2Y_1^2 + X^3Y_2^3 + \cdots .
\end{eqnarray*}

 Another direction of research could involve studying the algebras $T_n(\rho, \lambda)$ and $W_n(\lambda)$. One could try to
generalize results only known about the overconvergent case $(\lambda(x) = id)$, such as those proven in Gross-Kl\"{o}nne \cite{Gr}. One could also try to develop the $k$-affinoid theory of $T_n(\rho, \lambda)$ and $W_n(\lambda)$.

\end{document}